\documentclass[a4paper,12pt]{article}
\usepackage[a4paper, margin=3.5cm, top=4cm]{geometry}
\usepackage{amssymb,amsmath,amsthm}
\usepackage[dvipsnames]{xcolor}
\usepackage[pdftex]{graphicx}
\usepackage{mathabx}
\usepackage{tikz} 
\usepackage{wasysym}
\usepackage[all]{xy}
\usepackage{relsize}
\usepackage{pgf}
\usepackage{hyperref}

\usetikzlibrary{positioning}
\usetikzlibrary{cd}
\usetikzlibrary{intersections}

\bibliographystyle{alpha}

\linespread{1.2}

\newtheorem{theorem}{Theorem}[section]
\newtheorem{lemma}[theorem]{Lemma}
\newtheorem{proposition}[theorem]{Proposition}
\newtheorem{corollary}[theorem]{Corollary}

\newtheorem{definition}[theorem]{Definition}
\newtheorem{proposition-definition}[theorem]{Proposition-Definition}

\theoremstyle{remark}
\newtheorem*{remark}{Remark}
\newtheorem*{example}{Example}

\newcommand{\oo}{\mathcal{O}}

\newcommand{\mc}{\mathcal}
\newcommand{\mbb}{\mathbb}

\DeclareMathOperator{\GL}{GL}

\definecolor{mypink1}{RGB}{231, 62, 156}
\definecolor{mypink2}{RGB}{219, 48, 122}
\definecolor{mygreen}{RGB}{107, 165, 51}
\definecolor{mygreen2}{RGB}{16,131,58}
\definecolor{teal}{RGB}{72, 146, 182}
\definecolor{lightpurple} {RGB}{172, 82, 204}
\definecolor{darkpurple}{RGB}{103,17,168}
\definecolor{myred1}{RGB}{128,0,32}
\definecolor{myblue1}{RGB}{24,189,225}
\definecolor{myblue2}{RGB}{16,25,144}

\begin{document}

\title{Measure and Integration on $\GL_2$ over a Two-Dimensional Local Field} \date{} \author{Raven Waller\thanks{This work was completed while the author was supported by an EPSRC Doctoral Training Grant at the University of Nottingham.}}

\maketitle

\begin{abstract}
We define a translation-invariant measure and integral on $\GL_2$ over a two-dimensional local field $F$ by combining elements of the classical $\GL_2$ theory and the theory developed by Fesenko for the field $F$ itself. We give several alternate expressions for the integral, including one which agrees with the integral defined previously by Morrow.
\end{abstract}

\section{Introduction}
For a nonarchimedean local field $L$, the topology, analysis, and arithmetic on $L$ are intimately related. Inside $L$, one has its subring $\mc{O}_L$ of integral elements. The set of all translates $\alpha + \pi^i \mc{O}_L$ of fractional ideals of $\mc{O}_L$ is basis for the topology on $L$ (here $\pi$ is a prime element of $\mc{O}_L$). With respect to this topology, $L$ is a locally compact topological field, and so we have a Haar measure $\mu_L$ on $L$, which is usually normalised so that $\mu(\mc{O}_L)=1$.

These relationships transfer quite naturally to the group $\GL_n(L)$ of $n \times n$ invertible matrices over $L$. Since $L$ is locally compact, so is $\GL_n(L)$. Its maximal compact subgroup is $\GL_n(\mc{O}_L)$, which in turn contains the compact subgroups $K_m = I_n + \pi^m M_2(\mc{O}_L)$, which are the matrix analogues of the higher unit groups of $L$.

When one moves to a higher dimensional local field $F$, these relationships begin to break down. In particular, the topology on $F$ (which is no longer locally compact) becomes much more separated from its arithmetic structures. Furthermore, the loss of local compactness means there is no real valued Haar measure on $F$. 

The problem of finding a suitable replacement for the Haar measure remained open for several decades. However, in the early 2000s, Fesenko made the remarkable observation that, if one extends the space of values from $\mbb{R}$ to $\mbb{R}(\!(X_2)\!) \cdots(\!(X_n)\!)$, one obtains a theory of Harmonic analysis on an $n$-dimensional local field $F$ which generalises the one-dimensional theory quite well.

By making the above extension, Fesenko defined in \cite{fesenko-aoas1}, \cite{fesenko-loop} a translation-invariant measure $\mu$ on the ring of subsets of $F$ generated by translates of fraction ideals of the ring $O_F$ of integers of $F$ with respect to a discrete valuation of rank $n$. Interestingly, he noted that his generalised measure may fail to be countably additive in certain cases, although this can only happen if the infinite series which arise do not absolutely converge.

Several years later a similar idea was considered by Morrow in \cite{morrow-2dint}. Rather than appealing to a measure and writing down an explicit family of measurable sets, he defined an integral on $F$ directly by lifting integrable functions from the residue field. In fact, his constructions work in the more general setting when $F$ is any split valuation field (i.e. a field equipped with a valuation $v: F^\times \rightarrow \Gamma$ such that there is a map $t : \Gamma \rightarrow F^\times$ from the value group $\Gamma$ into $F^\times$ such that $v \circ t$ is the identity on $\Gamma$) whose residue field is a local field (which may be archimedean). In the case that $F$ is an $n$-dimensional local field, Fesenko's integral is recovered.

Morrow then went on to apply his construction to the group $\GL_n(F)$ in \cite{morrow-gln}, and he considered in detail the effect of performing linear changes of variables. He then went on in \cite{morrow-fubini} to notice that when one performs changes of variables which are not linear, Fubini's Theorem may fail. He also noted that to define an integral on an arbitrary algebraic group would probably require a more general approach.

In this paper, we instead construct an $\mbb{R}(\!(X)\!)$-valued, finitely additive measure on $\GL_2(F)$, where $F$ is a two-dimensional nonarchimedean local field, using Fesenko's explicit approach. As our `generating sets' we choose analogues of the compact subgroups $K_m$, namely
\begin{displaymath}
K_{i,j} := I_2 + t_1^i t_2^j M_2(O_F),
\end{displaymath}
where $O_F$ is the rank two ring of integers of $F$, and $t_1,t_2$ are local parameters for $F$ (so $t_1$ generates the maximal ideal of $O_F$, $t_2$ generates the maximal ideal of the rank one integers $\mc{O}_F$). The groups $K_{i,j}$ fit into the following lattice.
\begin{displaymath}
\xymatrix@=1.2em{
& \vdots & \vdots & \vdots & \vdots & \\
\cdots \ar@{}[r]|-*[@]{\supset} & K_{-1,2} \ar@{}[r]|-*[@]{\supset} \ar@{}[u]|-*[@]{\supset} & K_{0,2} \ar@{}[r]|-*[@]{\supset} \ar@{}[u]|-*[@]{\supset} & K_{1,2} \ar@{}[r]|-*[@]{\supset} \ar@{}[u]|-*[@]{\supset} & K_{2,2} \ar@{}[u]|-*[@]{\supset} \ar@{}[r]|-*[@]{\supset} & \cdots \\
\cdots \ar@{}[r]|-*[@]{\supset} & K_{-1,1} \ar@{}[r]|-*[@]{\supset} \ar@{}[u]|-*[@]{\supset} & K_{0,1} \ar@{}[r]|-*[@]{\supset} \ar@{}[u]|-*[@]{\supset} & K_{1,1} \ar@{}[r]|-*[@]{\supset} \ar@{}[u]|-*[@]{\supset} & K_{2,1} \ar@{}[u]|-*[@]{\supset} \ar@{}[r]|-*[@]{\supset} & \cdots \\
&&K \ar@{}[u]|-*[@]{\supset} \ar@{}[r]|-*[@]{\supset} & K_{1,0} \ar@{}[u]|-*[@]{\supset} \ar@{}[r]|-*[@]{\supset}  & K_{2,0} \ar@{}[u]|-*[@]{\supset} \ar@{}[r]|-*[@]{\supset} & \cdots
}
\end{displaymath}

From the point of view of this theory, $\GL_2(F)$ behaves to some extent as if it is a locally compact group with maximal compact subgroup $K=\GL_2(O_F)$. In particular, it is often (though not always) more appropriate to work with objects associated to $O_F$ than to the rank one integers $\mc{O}_F$, a phenomenon which is readily apparent in Fesenko's work, as well as in several other places (such as \cite{kim-lee-spherical}, \cite{lee-iwahori}).

What is interesting to note is that, while Morrow's general approach is more abstract, the explicit approach constructed here in fact descends to many other algebraic groups quite naturally. This then gives a sound starting point for one to search for a theory in the sense of Morrow for algebraic groups: if such a theory exists, it should at the very least give the same results as the less general but more direct approach wherever the two intersect.

At first it may seem paradoxical that in order to make further generalisations one must lose generality, but this phenomenon appears to be very common in higher dimensional number theory. When one leaves behind everything which prevents the crossing of ``dimensional barriers", and then builds the higher dimensional theory on the remaining foundations, many exciting similarities seem to appear.

The contents of this paper are as follows. First of all, we review both the one-dimensional theory for $\GL_2$ and Fesenko's definition of the measure on a two-dimensional field in section \ref{sec:review}. In section \ref{sec:structure} we study the structure of $\GL_2$ over a two-dimensional local field, looking closely at the properties of the distinguished subgroups $K_{i,j}$. This section culminates in the definition of the ring $\mc{R}$ of measurable subsets, which roughly speaking is generated by the $K_{i,j}$.

In section \ref{sec:measure} we then define a left-invariant, $\mbb{R}(\!(X)\!)$-valued measure $\mu$ on $\mc{R}$, closely connected to both theories discussed in section \ref{sec:review}, such that $\mu(\GL_2(O_F))=1$ and
\begin{displaymath}
\mu(K_{i,j}) = \frac{q^3}{(q^2-1)(q-1)} q^{-4i}X^{4j}.
\end{displaymath}
We then show that this measure is well-defined in two important steps. First, we show that if we write a particular measurable set in two different ways, there exists a common ``refinement" of both presentations. Then, we show that the measure is well-defined when one passes from a measurable set to its refinement.

This is followed immediately by the definition of the integral in section \ref{sec:integration}. After some preliminaries, we establish the classical formula
\begin{displaymath}
\begin{split} \begin{flalign*} \int_{\GL_2(F)} f(g) \ d \mu(g) = && \end{flalign*} \\
\frac{q^3}{(q^2-1)(q-1)} \int_{F} \int_{F} \int_F \int_{F} \frac{1}{|\alpha \delta - \beta \gamma|_F^2} f \bigg(
\bigg( \scalebox{0.8}{$\begin{array}{cc}
\alpha & \beta    \\
\gamma & \delta
 \end{array} $} \bigg) \bigg) \ d \alpha \ d \beta \ d \gamma \ d \delta,
\end{split}
\end{displaymath}
which also coincides (up to a known constant) with the integral defined by Morrow in \cite{morrow-gln}. Using this formula, we then deduce several anticipated properties of the measure, including right-invariance and (in a certain refined sense) countable additivity. 

Finally, we include as an appendix \ref{sec:morrow} a brief discussion regarding the similarities and differences between the approach to integration considered here and the previous approach due to Morrow.

\textbf{Notation.} Throughout this paper we will use the following notation. Unless specified otherwise, $F$ will always denote a $2$-dimensional local field. $\oo_F$ and $O_F$ will be (respectively) the rank-one and rank-two integers of $F$. $E=\overline{F}$ will denote the (first) residue field of $F$ and $\mc{O}_E=\mc{O}_{\overline{F}}$ its ring of integers. Let $q$ be the number of elements in the finite field $\overline{E}$. Fix a rank two valuation $v: F^\times \rightarrow \mbb{Z} \times \mbb{Z}$, where the latter is ordered lexicographically from the right (so $(1,0) < (0,1)$), and fix a pair of local parameters $t_1,t_2$. With this notation, $t_2 \oo_F$ is the maximal ideal of $\oo_F$ and $t_1 O_F$ is the maximal ideal of $O_F$.

We denote the set of all $2 \times 2$ matrices with entries in $F$ by $M_2(F)$, and the subset of matrices invertible over $F$ by $\GL_2(F)$. We also define $M_2(O_F)$ and $\GL_2(O_F)$ similarly, and we write $I_2$ for the $2 \times 2$ identity matrix.

\textbf{Acknowledgements.} I would like to thank my supervisor Ivan Fesenko for suggesting this topic and for his support throughout the writing of this paper. I also want to thank everyone who attended the Kac-Moody Groups and $L$-Functions meeting in Nottingham in October 2016, where I gained several new ideas and insights into this work. In particular I am grateful to Kyu-Hwan Lee, Thomas Oliver, and Wester van Urk for their many helpful comments and suggestions. I would also like to thank Matthew Morrow for reading an earlier version of this text and providing several important comments. Finally, I am grateful to the anonymous referee for their comments and suggestions.

\section{Brief review of the existing theory} \label{sec:review}

In this section we give a brief outline of both the two-dimensional local field theory and the theory for $\GL_2$ over a one-dimensional field. We begin with the more classical one-dimensional theory.

Let $L$ be a nonarchimedean local field, let $\mc{O}_L$ be its ring of integers, and let $\pi \in \mc{O}_L$ be a generator of the maximal ideal. Both $L$ and $\GL_2(L)$ are locally compact groups, containing $\mc{O}_L$ and $\GL_2(\mc{O}_L)$ respectively as compact subgroups. Any maximal compact subgroup of $\GL_2(L)$ is isomorphic to $K=\GL_2(\mc{O}_L)$. For every integer $n \ge 1$, $K$ contains the compact subgroup $K_n = I_2 + \pi^n M_2(\mc{O}_L)$.

To define a Haar measure $\mu$ on $\GL_2(L)$ it is sufficient to specify the values $\mu(K_n)$ (see chapter 6 of \cite{goldfeld-hundley1} for details when $L=\mbb{Q}_p$). If we normalise so that $\mu(\GL_2(\oo_L))=1$, the only choice is $\mu(K_n) = |\GL_2(\oo_L):K_n|^{-1}$.

Using this measure, one then defines an integral on $\GL_2(L)$. In particular, for a locally constant function $f=\sum_i c_i \cdot char_{U_i}$, where $c_i \in \mbb{C}$ and $U_i \subset \GL_2(F)$ are measurable, we have $\int_{\GL_2(L)} f d \mu = \sum c_i \mu(U_i)$. 

The integral can be represented in matrix coordinates. Suppose $g = \bigg( \scalebox{0.8}{$\begin{array}{cc}
\alpha & \beta    \\
\gamma & \delta
 \end{array} $} \bigg) \in \GL_2(L)$. Then
\begin{equation*}
\int_{\GL_2(L)} f(g) \ d \mu(g) = c \int_{L} \int_{L} \int_L \int_{L} \frac{1}{|\alpha \delta - \beta \gamma|_L^2} f \bigg(
\bigg( \scalebox{0.8}{$\begin{array}{cc}
\alpha & \beta    \\
\gamma & \delta
 \end{array} $} \bigg) \bigg) \ d \alpha \ d \beta \ d \gamma \ d \delta.
\end{equation*}
Here, if $\mu_L$ denotes the measure on $L$, $|x|_L := \dfrac{\mu_L(xU)}{\mu_L(U)}$ for any measurable subset $U \subset L$ of nonzero measure, and $d \alpha = d \mu_L(\alpha)$, $d \beta = d \mu_L(\beta)$,  $d \gamma = d \mu_L(\gamma)$, $d \delta = d \mu_L(\delta)$. The constant $c = \dfrac{q^3}{(q^2-1)(q-1)}$ for the normalisation $\mu(\GL_2(\oo_L))=1$.

Now we turn to the measure defined by Fesenko in \cite{fesenko-aoas1} and \cite{fesenko-loop} for a two-dimensional local field $F$. For (a translate of) a fractional ideal $\alpha + t_1^i t_2^j O_F$, one defines $\mu(\alpha + t_1^i t_2^j O_F)=q^{-i}X^j$. This yields a finitely-additive, translation invariant measure on the ring of subsets of $F$ generated by sets of the above form, which is countably additive in a refined sense. (Since the measure on $\GL_2(F)$ will satisfy the same property we do not elaborate on this here, and instead refer to Corollary \ref{countadd}.) 

Instead of taking values in $\mbb{R}$, this measure instead takes values in the two-dimensional archimedean local field $\mbb{R}(\!(X)\!)$. From a topological perspective, the element $X$ should be smaller than every positive real number but greater than zero, and so it may be interpreted as an infinitesimal positive element.

One then proceeds to define a $\mbb{C}(\!(X)\!)$-valued integral on functions of the form $f=\sum_i c_i \cdot char_{U_i}$ in the same way as one does for locally constant functions on $\GL_2$ of a local field above. The integral of a function which is zero away from finitely many points is also defined to be $0$. Fesenko also extends the class of integrable functions to include characters of $F$, but this will not be of importance to us in the current paper.

We end this section with an important definition. For $\alpha \in F^\times$, let $|\alpha|_F = \dfrac{\mu(\alpha U)}{\mu(U)}$, where $U$ is any measurable subset of $F$ with nonzero measure. This does not depend on the choice of $U$. We also put $|0|_F=0$. This is an extension of the notion of absolute value $|\cdot|_L$ on a local field $L$, since the definition we have just given is equivalent to the usual definition in this case. However, since $|\cdot|_F$ takes values in $\mbb{R}(\!(X)\!)$ rather than $\mbb{R}$, we do not use the term `absolute value' for this function. (In \cite{fesenko-aoas1}, \cite{fesenko-loop} this is called the module.) If $\varepsilon \in O_F^\times$, we have $|t_1^i t_2^j \varepsilon|_F = q^{-i}X^j$.

\section{The structure of $\GL_2(F)$ and its subgroups}  \label{sec:structure}

We now aim to emulate the one-dimensional results for $\GL_2$ in dimension two, following the blueprints of Fesenko's measure via distinguished sets. From now on we will use the notation as defined in the first section. 

We begin with the two-dimensional analogue of the compact subgroups $K_n$.

\begin{definition}
For $(i,j) > (0,0)$ in $\mbb{Z} \oplus \mbb{Z}$, put $K_{i,j} = I_2 + t_1^i t_2^j M_2 (O_F)$. One easily checks that $K_{i,j} \subset K_{m,n}$ if $(m,n) \le (i,j)$. It will be convenient to set $K=\GL_2(O_F)$.
\end{definition}

\begin{lemma} \label{Kijnormal}
$K_{i,j}$ is a normal subgroup of $K$.
\end{lemma}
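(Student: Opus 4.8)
The plan is to reduce everything to one observation: the factor $t_1^i t_2^j$ appearing in the definition of $K_{i,j}$ is a \emph{scalar}, hence central in $M_2(F)$, and conjugation by a unit of $M_2(O_F)$ preserves $M_2(O_F)$. Once this is isolated, normality is a one-line computation.

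First I would record that $K_{i,j}$ really is a subgroup of $K$, since the lemma asserts this implicitly. Because $(i,j) > (0,0)$, the element $t_1^i t_2^j$ lies in the maximal ideal $\mf{m} = t_1 O_F$ of $O_F$; writing $s = t_1^i t_2^j$ for brevity, for $A, B \in M_2(O_F)$ the product
\[ (I_2 + sA)(I_2 + sB) = I_2 + s\bigl(A + B + sAB\bigr) \]
again lies in $K_{i,j}$, as $A + B + sAB \in M_2(O_F)$. For inverses, given $M = I_2 + sA$ one has $\det M \in 1 + \mf{m} \subset O_F^\times$, so $M \in K$; rearranging $M M^{-1} = I_2$ yields $M^{-1} = I_2 - s\,A M^{-1}$, and since $A, M^{-1} \in M_2(O_F)$ the correction term lies in $s\,M_2(O_F)$, whence $M^{-1} \in K_{i,j}$.

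For normality I would then take $g \in K$ and $k = I_2 + sA \in K_{i,j}$ with $A \in M_2(O_F)$, and compute directly
\[ g k g^{-1} = g g^{-1} + s\, g A g^{-1} = I_2 + s\,(g A g^{-1}), \]
where the scalar $s$ has been pulled past $g$ using its centrality. It then remains only to check that $g A g^{-1} \in M_2(O_F)$, which is immediate from the definition of $K = \GL_2(O_F)$: both $g$ and $g^{-1}$ have entries in $O_F$, so the triple product $g A g^{-1}$ again lies in $M_2(O_F)$. Hence $g k g^{-1} \in I_2 + s\,M_2(O_F) = K_{i,j}$, as required.

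There is essentially no hard step here; the only point requiring genuine care is the use of $g^{-1} \in M_2(O_F)$, which is exactly what distinguishes $\GL_2(O_F)$ from the set of matrices in $M_2(O_F)$ that merely happen to be invertible over $F$. Everything else rests on the centrality of the scalar $t_1^i t_2^j$, which is precisely what makes conjugation interact so cleanly with the coset structure $I_2 + t_1^i t_2^j M_2(O_F)$.
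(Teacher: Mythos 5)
Your proof is correct, and it reaches the same essential fact as the paper's proof --- that conjugation by $g \in K$ sends $t_1^i t_2^j M_2(O_F)$ into itself because $g^{-1}$ again has entries in $O_F$ --- but packages it coordinate-free: the paper writes out all four entries of $gkg^{-1}$ explicitly with denominators $ad-bc$ and checks each lies in $O_F$, whereas you pull the central scalar $s = t_1^i t_2^j$ through and reduce to the single containment $gAg^{-1} \in M_2(O_F)$. Your version is cleaner and generalises verbatim to $\GL_n$; you also supply the verification that $K_{i,j}$ is a subgroup (closure under products and inverses via $\det(I_2+sA) \in 1+\mf{m} \subset O_F^\times$), which the paper only asserts with ``one easily checks.'' One small point worth keeping explicit, as you do: the claim $s \in \mf{m}$ for all $(i,j)>(0,0)$ uses that the value group is ordered lexicographically from the right, so even for $i$ negative one has $(i,j) \ge (1,0)$ whenever $j>0$; without that, ``$s$ lies in the maximal ideal'' would not be obvious. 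This step is needed only for the invertibility of $\det(I_2+sA)$, not for the normality computation itself, which goes through for any $s \in O_F$.
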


\begin{proof}
Let  \[ g = \bigg( \scalebox{0.8}{$\begin{array}{cc}
a & b    \\
c & d  
 \end{array} $} \bigg) \in K, \ 
k= \bigg( \scalebox{0.8}{$\begin{array}{cc}
1+t_1^i t_2^j \alpha &  t_1^i t_2^j \beta   \\
t_1^i t_2^j \gamma & 1 + t_1^i t_2^j \delta  
 \end{array} $} \bigg) \in K_{i,j}. \] 
Then
\[ gkg^{-1} = \bigg( \scalebox{0.8}{$\begin{array}{cc}
1 + t_1^i t_2^j w & t_1^i t_2^j x    \\
t_1^i t_2^j y & 1 + t_1^i t_2^j z
 \end{array} $} \bigg) \in K_{i,j}, \]
since $w = \dfrac{\alpha ad + \gamma bd - \beta a c - \delta b c}{ad-bc}$, $x = \dfrac{\beta a^2 + (\delta - \alpha) ab - \gamma b^2}{ad-bc}$, \linebreak $y = \dfrac{\gamma d^2 + (\alpha - \delta) cd - \beta c^2}{ad-bc}$, $z = \dfrac{\beta ac + \delta ad - \alpha bc - \gamma bd}{ad-bc}$ are all elements of $O_F$ (note that $ad-bc = \det g \in O_F^\times$ by assumption).
\end{proof}

With a view towards defining an invariant measure on $\GL_2(F)$, we study further important properties of the subgroups $K_{i,j}$.

\begin{lemma}  \label{intproperty}
Let $(i,j) \le (m,n)$, and let $g, h \in \GL_2(F)$. Then the intersection of $g K_{i,j}$ and $h K_{m,n}$ is either empty or equal to one of them.
\end{lemma}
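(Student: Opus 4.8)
The plan is to reduce the statement to the elementary fact that two left cosets of a subgroup are either disjoint or equal, taking advantage of the nesting of the $K_{i,j}$. The first step is to record the direction of the relevant inclusion: by the Definition preceding Lemma \ref{Kijnormal}, the hypothesis $(i,j) \le (m,n)$ gives $K_{m,n} \subseteq K_{i,j}$, so $hK_{m,n}$ is the \emph{smaller} coset. I therefore expect that the intersection, whenever it is nonempty, equals all of $hK_{m,n}$.

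Assume then that $gK_{i,j} \cap hK_{m,n} \neq \emptyset$ and fix a point $x$ in it, writing $x = g k_1 = h k_2$ with $k_1 \in K_{i,j}$ and $k_2 \in K_{m,n}$. The second step is to show the containment $hK_{m,n} \subseteq gK_{i,j}$. Given an arbitrary element $y = hk \in hK_{m,n}$ with $k \in K_{m,n}$, I rewrite $h = x k_2^{-1}$ and obtain $y = x(k_2^{-1} k)$. Since $K_{m,n}$ is a subgroup, $k_2^{-1}k \in K_{m,n} \subseteq K_{i,j}$; and since $x \in gK_{i,j}$ with $K_{i,j}$ closed under multiplication, $y = x(k_2^{-1}k) \in gK_{i,j}$. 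Hence $hK_{m,n} \subseteq gK_{i,j}$, and intersecting both sides with $hK_{m,n}$ yields $gK_{i,j} \cap hK_{m,n} = hK_{m,n}$, which is one of the two sets, as claimed.

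There is no serious obstacle here; the only points requiring care are bookkeeping ones. First, one must use the inclusion $K_{m,n} \subseteq K_{i,j}$ in the correct direction, which is precisely why the conclusion selects the smaller coset rather than the larger. Second, the argument relies on $K_{i,j}$ and $K_{m,n}$ being genuine subgroups of $\GL_2(F)$, so that inverses and products remain inside; this is implicit in Lemma \ref{Kijnormal}, but for completeness one could observe directly that $I_2 + t_1^i t_2^j M_2(O_F)$ is closed under multiplication and inversion. Working consistently with left cosets throughout then closes the argument.
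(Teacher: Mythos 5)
Your proof is correct and follows essentially the same route as the paper: both arguments hinge on the inclusion $K_{m,n} \subseteq K_{i,j}$ and the standard disjoint-or-equal dichotomy for cosets of $K_{i,j}$, with yours simply unwinding that dichotomy into an explicit element computation showing $hK_{m,n} \subseteq gK_{i,j}$. No gaps.
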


\begin{proof}
By assumption $K_{m,n} \subset K_{i,j}$, hence $g K_{i,j} \cap h K_{m,n} \subset g K_{i,j} \cap h K_{i,j}$. The latter two sets are $\GL_2(F)$-cosets of the same subgroup $K_{i,j}$, hence are disjoint or equal. If they are disjoint then $g K_{i,j}$ and $h K_{m,n}$ are also disjoint. If they are equal then $g K_{i,j} \cap h K_{m,n} = h K_{i,j} \cap h K_{m,n} = h K_{m,n}$.
\end{proof}

Note that, by this intersection property, any union $g K_{i,j} \cup h K_{m,n}$ is either disjoint or equal to one of the components. 

\begin{remark}
Although Lemma \ref{intproperty} is very simple, it will be used more frequently than any other result in this paper, and so we would like to draw attention to its importance here.
\end{remark}

Following section 6 of \cite{fesenko-loop}, we make the following definitions.

\begin{definition}
A distinguished set is either empty or a set of the form $g K_{i,j}$ with $g \in \GL_2(F)$ and $(i,j)>(0,0)$. A dd-set is a set of the form $A= \bigcup_i A_i \backslash (\bigcup_j B_j)$, with pairwise disjoint distinguished sets $A_i$, pairwise disjoint distinguished sets $B_j$, and $\bigcup_j B_j \subset \bigcup_i A_i$. A ddd-set is a disjoint union of dd-sets.
\end{definition}

\begin{lemma}
The class $\mc{R}$ of ddd-sets is closed under union, intersection, and difference. $\mc{R}$ is thus the minimal ring (of sets) which contains all of the distinguished sets.
\end{lemma}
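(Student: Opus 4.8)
The plan is to verify the three closure properties using the single structural input of Lemma \ref{intproperty}, which says that any two distinguished sets are either disjoint or nested, with the smaller coset contained in the larger. I would first record the base cases that follow immediately: the intersection of two distinguished sets is again a distinguished set (order the two subgroups and apply the lemma), and the difference $gK_{i,j} \setminus hK_{m,n}$ of two distinguished sets is a dd-set, since it is either empty, or equal to $gK_{i,j}$, or of the form $A \setminus B$ with $B \subseteq A$ — in each case a dd-set by definition. These observations say precisely that the distinguished sets form a laminar family.

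The key bookkeeping step I would isolate next is that any finite collection of distinguished sets $S_1, \dots, S_n$ admits a \emph{common refinement}: by the intersection property the relation of containment is a partial order under which any two members are comparable or disjoint, so the maximal members are pairwise disjoint and their union is $\bigcup_a S_a$. Dually, for a single distinguished set $S$ and distinguished sets $T_1, \dots, T_n$, each $T_m \cap S$ is distinguished and contained in $S$, so replacing the $T_m$ by the $T_m \cap S$ and extracting maximal members shows that $S \setminus \bigcup_m T_m$ is a dd-set (with kept part $S$ and pairwise disjoint subtracted parts inside $S$, exactly the required normal form).

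With these in hand I would establish the three closures together, reducing every Boolean combination to statements about distinguished sets. Writing two dd-sets as $A = \bigcup_i A_i \setminus \bigcup_j B_j$ and $C = \bigcup_k C_k \setminus \bigcup_l D_l$, the identity
\[
A \cap C = \bigcup_{i,k} \Big( (A_i \cap C_k) \setminus \big( \textstyle\bigcup_j B_j \cup \bigcup_l D_l \big) \Big)
\]
exhibits $A \cap C$ as a disjoint union (the $A_i \cap C_k$ are pairwise disjoint because the $A_i$ and the $C_k$ each are) of sets of the form $S \setminus \bigcup_m T_m$ with $S$ distinguished, each a dd-set by the dual refinement step; hence $A \cap C$ is a ddd-set, and distributing over the disjoint unions defining general ddd-sets gives closure under intersection. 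For difference I would use
\[
A \setminus C = \big( A \setminus \textstyle\bigcup_k C_k \big) \cup \big( A \cap \textstyle\bigcup_l D_l \big),
\]
which reduces to intersecting distinguished sets with, and subtracting distinguished sets from, ddd-sets, together with closure under finite union; union in turn is handled by passing to a common refinement of all the distinguished sets involved, so that $P \cup Q$, $P \cap Q$, and $P \setminus Q$ can all be read off a single partition into pairwise disjoint distinguished pieces. This makes the three closures amenable to a joint induction on the number of cosets appearing.

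I expect the only genuine difficulty to be organisational rather than conceptual: all the mathematical content already sits in Lemma \ref{intproperty}, and the work is the repeated refinement needed to present each resulting set in the required normal form, namely with pairwise disjoint $A_i$, pairwise disjoint $B_j$, and $\bigcup_j B_j \subseteq \bigcup_i A_i$. Once closure under union, intersection, and difference is established, $\mc{R}$ is a ring of sets containing every distinguished set; conversely, any ring containing all distinguished sets must contain their finite unions and differences, hence all dd-sets and all ddd-sets, so $\mc{R}$ is the minimal such ring.
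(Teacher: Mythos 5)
Your proposal is correct and follows essentially the same route as the paper: everything rests on Lemma \ref{intproperty}, intersections of dd-sets are handled by the identity $E_1 \cap E_2 = (A \cap C) \setminus \bigl( \bigcup_j B_j \cup \bigcup_l D_l \bigr)$, and differences by the same de Morgan decomposition you write down, after which minimality of $\mc{R}$ is immediate. If anything, your write-up is more careful than the paper's at the two places where it is terse: you verify the dd-set normal form (subtracted shells pairwise disjoint and contained in the kept part) via the intersect-with-$S$-and-extract-maximal-members step, and you give an actual argument for closure under unions, which the paper asserts holds ``by definition'' even though non-disjoint unions of dd-sets require exactly the disjointification/common-refinement you describe.
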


\begin{proof}
First we show that the class of dd-sets is closed under intersection. Since intersection distributes over union, it is enough to check this for two dd-sets of the form $E_1  = A \backslash \bigcup_i B_i$ and $E_2 = C \backslash \bigcup_j D_j$ with $A, B_i, C,$ and $D_j$ all distinguished. We have $$E_1 \cap E_2 = (A \cap C) \backslash \left( \bigcup_i B_i \cup \bigcup_j D_j \right),$$ and since $A$ and $C$ are distinguished their intersection is distinguished also by Lemma \ref{intproperty}. $E_1 \cap E_2$ is thus a dd-set by definition.

Since the classes of distinguished sets and dd-sets are not closed under unions, if we show that the class $\mc{R}$ of ddd-sets is closed under union, intersection, and difference then by construction it is the minimal ring containing the distinguished sets. However, $\mc{R}$ is closed under unions by definition, and is closed under intersections by the same argument in the previous paragraph, and so it remains to prove that it is closed under differences.

As before let $E_1  = A \backslash \bigcup_i B_i$ and $E_2 = C \backslash \bigcup_j D_j$ with $A, B_i, C, D_j$ all distinguished. Using de Morgan's laws one obtains $$E_1 \backslash E_2 = \left(A \backslash \left( C \cup \bigcup_i B_i \right) \right) \cup \left( \bigcup_j (A \cap D_j) \backslash \bigcup_i B_i\right).$$ Both components are ddd-sets by definition, hence the union is a ddd-set as required.
\end{proof}

In Section \ref{sec:measure} we will show that there exists a finitely additive, translation-invariant measure on $\mc{R}$ taking values in $\mbb{R} (\!(X)\!)$. To this end, we outline some useful properties of ddd-sets.

\begin{definition}
For two distinguished sets $gK_{i,j} \supset hK_{m,n}$, by abuse of language we define the index $|gK_{i,j} : h K_{m,n}|$ of $hK_{m,n}$ in $gK_{i,j}$ to be the index $|K_{i,j}: K_{m,n}|$. 
\end{definition}

\begin{remark}
This definition makes sense since $g^{-1} h K_{m,n}$ is a coset of $K_{m,n}$ inside $K_{i,j}$.
\end{remark}

\begin{lemma} \label{indexlemma}
If $j=n$, $|gK_{i,j} : h K_{m,n}|=q^{4(m-i)}$. Otherwise, the index is infinite.
\end{lemma}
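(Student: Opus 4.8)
The plan is to reduce, via the definition of the index, to computing the group index $|K_{i,j}:K_{m,n}|$, and then to split into the two cases according to whether $j=n$.

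First I would treat the finite case $j=n$, where the hypothesis $(i,j)\le(m,n)$ forces $i\le m$. Here I would filter $K_{i,j}$ by the chain $K_{i,j}\supseteq K_{i+1,j}\supseteq\cdots\supseteq K_{m,j}$ (each term is defined since $j\ge 1$, or else $r\ge i\ge 1$ when $j=0$) and compute each successive index separately, using multiplicativity of the index. For a single step I would consider the map $K_{r,j}\to M_2(O_F)/t_1 M_2(O_F)$ sending $I_2+t_1^r t_2^j A$ to $A\bmod t_1 M_2(O_F)$. The essential point is that this is a group homomorphism: expanding $(I_2+t_1^r t_2^j A)(I_2+t_1^r t_2^j B)=I_2+t_1^r t_2^j(A+B)+t_1^{2r}t_2^{2j}AB$, the quadratic correction $t_1^{2r}t_2^{2j}AB$ lies in $t_1^{r+1}t_2^j M_2(O_F)$, so the group law becomes additive modulo the kernel. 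This requires $(2r,2j)\ge(r+1,j)$ in the lexicographic-from-the-right order, which holds because $j\ge 1$ makes the second coordinate dominate, while in the subcase $j=0$ the constraint $i\ge 1$ gives $r\ge 1$ and hence $2r\ge r+1$. The map is visibly surjective with kernel $K_{r+1,j}$, so $K_{r,j}/K_{r+1,j}\cong M_2(O_F/t_1 O_F)$. Identifying $O_F/t_1 O_F$ with the last residue field $\overline{E}\cong\mbb{F}_q$ gives $|K_{r,j}:K_{r+1,j}|=q^4$, and telescoping over $r=i,\dots,m-1$ yields $q^{4(m-i)}$.

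For the infinite case $j<n$, I would exhibit infinitely many pairwise inequivalent cosets directly. The key structural fact is that $O_F/t_2 O_F\cong\mc{O}_E$, the (infinite) ring of integers of the first residue field. I would choose lifts $c_\lambda\in O_F$ of distinct elements of $\mc{O}_E$ and set $g_\lambda=I_2+c_\lambda t_1^i t_2^j e$, where $e$ is the matrix with $1$ in the top-left position and zeros elsewhere; each $g_\lambda$ lies in $K_{i,j}$. Since the $g_\lambda$ are diagonal, a short computation gives $g_\lambda g_\mu^{-1}-I_2=\operatorname{diag}\!\big((c_\lambda-c_\mu)t_1^i t_2^j(1+c_\mu t_1^i t_2^j)^{-1},\,0\big)$, whose denominator is a unit; hence $g_\lambda g_\mu^{-1}\in K_{m,n}$ would force $v(c_\lambda-c_\mu)\ge(m-i,n-j)$, a vector whose second coordinate is $n-j\ge 1$. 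But by construction $c_\lambda-c_\mu$ has nonzero image in $E$, so the second coordinate of $v(c_\lambda-c_\mu)$ is $0$, a contradiction. Thus the $g_\lambda$ represent distinct cosets and the index is infinite.

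The main obstacle I anticipate is the careful bookkeeping forced by the non-abelian group law: everything hinges on showing that the quadratic terms $t_1^{2r}t_2^{2j}AB$ descend into the next subgroup, and on verifying the relevant inequalities in the lexicographic-from-the-right order for all admissible $(i,j)$, including negative $i$ and the boundary subcase $j=0$. Once the two residue identifications $O_F/t_1 O_F\cong\overline{E}$ (finite) and $O_F/t_2 O_F\cong\mc{O}_E$ (infinite) are in hand, the dichotomy between the finite and infinite cases is precisely the reflection of quotienting by $t_1$ versus $t_2$.
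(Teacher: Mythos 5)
Your proof is correct and takes essentially the same route as the paper: the finite case via telescoping the chain $K_{i,j}\supseteq K_{i+1,j}\supseteq\cdots\supseteq K_{m,j}$ with successive quotients isomorphic to $M_2(O_F/t_1O_F)\cong M_2(\mbb{F}_q)$, and the infinite case by detecting the infinitude of the quotient modulo $t_2$ (the paper phrases this as $K_{i,j}/K_{i,j+1}\cong(O_F/t_2O_F)^4$ being infinite, whereas you exhibit explicit coset representatives of $K_{m,n}$ directly, which also covers the boundary situation $n=j+1$, $m<i$ without further comment). One cosmetic slip: $O_F/t_2O_F$ is an extension of $\mc{O}_E$ by $E/\mc{O}_E$ rather than $\mc{O}_E$ itself, but your argument only uses that lifts of distinct elements of $\mc{O}_E$ remain distinct modulo $t_1^{m-i}t_2^{n-j}O_F$, which does hold since $n-j\ge1$.
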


\begin{proof}
The map $K_{i,j} \rightarrow \left( t_1^i t_2^j O_F / t_1^{i+1} t_2^j O_F \right)^4 \simeq \left( O_F / t_1 O_F \right)^4$ given by $$\bigg( \scalebox{0.8}{$\begin{array}{cc}
1+t_1^i t_2^j \alpha &  t_1^i t_2^j \beta   \\
t_1^i t_2^j \gamma & 1 + t_1^i t_2^j \delta  
 \end{array} $} \bigg) \mapsto (\alpha, \beta, \gamma, \delta) \mod t_1 O_F$$ induces an isomorphism $K_{i,j} / K_{i+1,j} \simeq (O_F / t_1 O_F)^4$. We thus have $|gK_{i,j} : h K_{m,j}| = |K_{i,j}:K_{m,j}| = \prod_{r=i}^{m-1} |K_{r,j}:K_{r+1,j}| = |O_F / t_1 O_F|^{4(m-i)} = q^{4(m-i)}$.

On the other hand, the map $K_{i,j} \rightarrow \left( t_1^i t_2^j O_F / t_1^i t_2^{j+1} O_F \right)^4 \simeq \left( O_F / t_2 O_F \right)^4$ given by $\bigg( \scalebox{0.8}{$\begin{array}{cc}
1+t_1^i t_2^j \alpha &  t_1^i t_2^j \beta   \\
t_1^i t_2^j \gamma & 1 + t_1^i t_2^j \delta  
 \end{array} $} \bigg) \mapsto (\alpha, \beta, \gamma, \delta) \mod t_2 O_F$ induces an isomorphism $K_{i,j} / K_{i,j+1} \simeq (O_F / t_2 O_F)^4$, and the latter group has infinite order.
\end{proof}

\begin{lemma} \label{indexink}
The index $|K:K_{i,0}|= q^{4i-3} (q^2-1)(q-1)$, and for $j>0$ the index $|K:K_{i,j}|$ is infinite for any $i$.
\end{lemma}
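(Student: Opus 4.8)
The plan is to compute the finite index $|K : K_{i,0}|$ by factoring it through the chain $K \supset K_{1,0} \supset K_{i,0}$, isolating the only genuinely new contribution in the bottom step. The top of the chain is already handled by Lemma \ref{indexlemma}: applied with common second index $0$ it gives $|K_{1,0} : K_{i,0}| = q^{4(i-1)}$, so by the tower law it remains only to determine the base index $|K : K_{1,0}|$ and multiply.

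For $|K : K_{1,0}|$ I would use reduction modulo $t_1$. Since $t_1 O_F$ is the maximal ideal of $O_F$ and the residue field $O_F/t_1 O_F \cong \overline{E}$ is finite with $q$ elements (as already used in Lemma \ref{indexlemma}), entrywise reduction defines a group homomorphism $K = \GL_2(O_F) \to \GL_2(O_F/t_1 O_F)$. I would verify that this map is surjective with kernel exactly $K_{1,0}$. Surjectivity uses that $O_F$ is local: any matrix over the residue field with nonzero determinant lifts entrywise to a matrix over $O_F$ whose determinant reduces to a nonzero element and is therefore a unit in $O_F$, so the lift already lies in $\GL_2(O_F)$. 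The kernel consists of the $A \in K$ with $A \equiv I_2$ entrywise modulo $t_1$, i.e. $A - I_2 \in t_1 M_2(O_F)$, which is precisely $K_{1,0}$. Hence $K/K_{1,0} \cong \GL_2(O_F/t_1 O_F)$ and $|K : K_{1,0}| = (q^2-1)(q^2-q) = q(q-1)(q^2-1)$. Combining, $|K : K_{i,0}| = q(q-1)(q^2-1)\cdot q^{4(i-1)} = q^{4i-3}(q^2-1)(q-1)$, as claimed.

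For the second assertion, fix $i$ and $j > 0$. Since $0 < j$ gives $(i,0) < (i,j)$ in the (right-lexicographic) ordering on $\mbb{Z} \oplus \mbb{Z}$, the definition yields $K_{i,j} \subset K_{i,0} \subset K$. Applying Lemma \ref{indexlemma} to the pair $K_{i,0} \supset K_{i,j}$, whose second indices $0$ and $j$ differ, shows that $|K_{i,0} : K_{i,j}|$ is infinite; by the tower law $|K : K_{i,j}| = |K : K_{i,0}|\cdot|K_{i,0} : K_{i,j}|$ is then infinite as well.

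The only real content is the base case $|K : K_{1,0}| = |\GL_2(O_F/t_1 O_F)|$, and I expect the main (mild) obstacle to be making the reduction argument airtight, specifically invoking the determinantal criterion for invertibility over the local ring $O_F$ to secure surjectivity and pinning down the kernel. Everything else is bookkeeping through Lemma \ref{indexlemma}, together with the standard count $|\GL_2(\overline{E})| = (q^2-1)(q^2-q)$.
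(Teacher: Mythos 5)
Your computation of $|K:K_{i,0}|$ is correct and is essentially identical to the paper's: reduction modulo $t_1$ gives $K/K_{1,0}\simeq\GL_2(O_F/t_1O_F)\simeq\GL_2(\mbb{F}_q)$ of order $(q^2-1)(q^2-q)$, and the rest is Lemma \ref{indexlemma} together with the tower law.

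The second half, however, has a gap. You run the tower law through the chain $K_{i,j}\subset K_{i,0}\subset K$ ``for any $i$'', but $K_{i,0}$ is a distinguished subgroup only when $(i,0)>(0,0)$, i.e.\ when $i>0$. For $i<0$ the set $I_2+t_1^iM_2(O_F)$ is not even contained in $\GL_2(O_F)$ (take the matrix $I_2+t_1^iI_2$: its diagonal entries $1+t_1^i$ have valuation $(i,0)<(0,0)$, so they do not lie in $O_F$), and for $i=0$ it equals $M_2(O_F)$, which contains non-invertible matrices; the paper explicitly warns in Section \ref{sec:measure} that an expression like $|K:K_{i,0}|$ is meaningless for $i<0$. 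So your argument only establishes the infinitude of $|K:K_{i,j}|$ for $i\ge1$. The repair is the one the paper uses: having shown that $|K:K_{1,j}|$ is infinite, observe that for $i\le0$ one has $K_{1,j}\subset K_{i,j}\subset K$ with $|K_{i,j}:K_{1,j}|=q^{4(1-i)}$ finite by Lemma \ref{indexlemma}, so the identity $|K:K_{1,j}|=|K:K_{i,j}|\cdot|K_{i,j}:K_{1,j}|$ forces $|K:K_{i,j}|$ to be infinite in this range as well.
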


\begin{proof}
The map $K \rightarrow \GL_2 (O_F / t_1 O_F)$, $g \mapsto g \mod t_1 O_F$ induces an isomorphism $K/K_{1,0} \simeq \GL_2(O_F / t_1 O_F) \simeq \GL_2 ( \mbb{F}_q )$, and the latter is well known to have order $(q^2-1)(q^2-q)$. We thus have $|K:K_{i,0}| = |K:K_{1,0}| \cdot |K_{1,0}:K_{i,0}| = q^{4(i-1)} (q^2-1)(q^2-q) = q^{4i-3} (q^2-1)(q-1)$.

Now suppose $j>0$. For $i \ge 0$, $|K:K_{i,j}| \ge |K_{0,j} : K_{i,j}|$, and the latter is infinite by Lemma \ref{indexlemma}. On the other hand, if $i<0$, the index $|K:K_{i,j}|$ differs from $|K_{0,j} : K_{i,j}|$ only by a finite constant, and so in either case $|K:K_{i,j}|$ is infinite.
\end{proof}

\begin{proposition} \label{allfiniteindex}
If $D$ is a distinguished set such that $D = \bigcup_{r=1}^n D_r$ is a disjoint union of finitely many distinguished sets $D_r$ then the indices $|D:D_r|$ are all finite.
\end{proposition}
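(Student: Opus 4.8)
The plan is to reduce the statement to a purely group-theoretic fact about partitions of the groups $K_{i,j}$, and then settle that fact by an induction built on one ``collapsing'' observation. First I would exploit left-translation invariance: writing $D = gK_{i,j}$, left multiplication by $g^{-1}$ turns the disjoint decomposition $D=\bigcup_{r}D_r$ into a disjoint decomposition $K_{i,j}=\bigcup_r g^{-1}D_r$, where each $g^{-1}D_r$ is again a distinguished set of the same type $K_{i_r,j_r}$ and the indices $|D:D_r|$ are unchanged. So I may assume $D=K_{i,j}$ and $D_r=h_rK_{i_r,j_r}$. Since $D_r\subseteq K_{i,j}$ and $I_2\in K_{i_r,j_r}$, we get $h_r\in K_{i,j}$ and $K_{i_r,j_r}\subseteq K_{i,j}$, i.e. $(i,j)\le(i_r,j_r)$, so in particular $j_r\ge j$. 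By Lemma \ref{indexlemma} the index $|D:D_r|=|K_{i,j}:K_{i_r,j_r}|$ is finite exactly when $j_r=j$. Thus the whole proposition reduces to showing that every $j_r$ equals $j$.

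Here I would use that $\mbb{Z}\oplus\mbb{Z}$ is totally ordered, so the subgroups $K_{i_r,j_r}$ are pairwise nested and admit a smallest member $M:=K_{i_*,j_*}$ with $(i_*,j_*)=\max_r(i_r,j_r)$; it satisfies $M\subseteq K_{i_r,j_r}$ for all $r$ and $M\subseteq K_{i,j}$, and is normal in $K_{i,j}$ by Lemma \ref{Kijnormal}. Because each $K_{i_r,j_r}$ contains $M$, each coset $h_rK_{i_r,j_r}$ is a union of cosets of $M$, so the decomposition descends to an honest partition of the quotient group $Q:=K_{i,j}/M$ into finitely many cosets $\overline{h_r}\,\overline{K_{i_r,j_r}}$ of the subgroups $\overline{K_{i_r,j_r}}=K_{i_r,j_r}/M$. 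These subgroups are again pairwise nested, all normal in $Q$, and satisfy $|Q:\overline{K_{i_r,j_r}}|=|K_{i,j}:K_{i_r,j_r}|$, which is finite iff $j_r=j$. It therefore suffices to prove that \emph{every} one of them has finite index in $Q$.

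The key observation I would isolate and then iterate is this: if an infinite group $G$ is partitioned into finitely many cosets of pairwise-nested normal subgroups and $H$ is the largest of these subgroups, then $H$ has finite index in $G$. Indeed, if $[G:H]$ were infinite, then in the infinite quotient $G/H$ each of the finitely many cosets would collapse to a single point (every subgroup being contained in $H=\ker$), forcing $G/H$ to be finite, a contradiction. Applying this with $G=Q$ shows the largest subgroup appearing has finite index; if it is the only one, we are done, and otherwise I would select a coset of it that is not itself one of the pieces (such a coset exists, since finitely many cosets of a finite-index subgroup cannot all be used up while leaving room for strictly smaller pieces), observe that this coset is subdivided entirely by the strictly smaller subgroups, and apply induction on the number of distinct subgroups to conclude that all of $\overline{K_{i_r,j_r}}$ have finite index in $Q$. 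Hence every $j_r=j$, and Lemma \ref{indexlemma} gives $|D:D_r|=q^{4(i_r-i)}<\infty$.

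I expect the genuine obstacle to be exactly this covering step: ruling out that finitely many ``thin'' cosets with $j_r>j$ (those of infinite index) could tile the part of $K_{i,j}$ left over. A naive count inside a single finite quotient such as $K_{i,j}/K_{i+1,j}$ fails, because the nesting $(i,j)\le(i_r,j_r)$ permits $i_r$ to be far smaller than $i$: a piece may be very ``wide'' in the $t_1$-direction while thin in the $t_2$-direction, and so need not collapse in that quotient. Passing instead to $Q=K_{i,j}/M$ and drawing the required infinitude from the $t_2$-direction, via the isomorphism $K_{a,b}/K_{a,b+1}\cong(O_F/t_2O_F)^4$ of Lemma \ref{indexlemma} whose target is infinite, is what makes the collapsing argument go through; this is in essence B.~H.~Neumann's lemma on covering a group by cosets, which one could alternatively invoke directly.
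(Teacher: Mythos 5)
Your proposal is correct, and it takes a genuinely different route from the paper. The paper argues directly inside $\GL_2(F)$: it first shows that the \emph{largest} of the groups $K_{i_r,j_r}$ (the one with minimal $(i_r,j_r)$) has finite index in $D$, because replacing each piece $g_rK_{i_r,j_r}$ by the larger coset $g_rK_{i_*,j_*}$ still covers $D$ while remaining inside it; it then relabels so that $|D:D_1|$ is finite, writes $D$ as a disjoint union of cosets $h_sD_1$, and uses Lemma \ref{intproperty} to show that every other $D_r$ either coincides with some $h_sD_1$ or contains one, whence $|D:D_r|\le|D:D_1|$ by the tower law. You instead reduce via Lemma \ref{indexlemma} to the claim that every $j_r$ equals $j$, pass to the quotient by the \emph{smallest} group $M$, and prove the general group-theoretic fact that a partition of a group into finitely many cosets of nested subgroups forces all of those subgroups to have finite index --- which, as you observe, is a special case of B.~H.~Neumann's covering lemma (in a partition no coset of an infinite-index subgroup can be discarded, so none can occur). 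Your route is more structural and makes transparent that nothing about $\GL_2$ is used beyond Lemmas \ref{intproperty} and \ref{indexlemma}; the paper's route avoids the induction and the quotient. One small point to tighten in your inductive step: applying the induction hypothesis to a single coset of the largest subgroup $H_1$ that is not itself a piece only yields finiteness for the subgroups occurring in \emph{that} coset, so you should run the argument over every such coset (each $H_t$ with $t\ge 2$ occurs in at least one, since a piece $g_rH_t$ sits inside $g_rH_1$, which cannot itself be a piece by disjointness), or simply invoke Neumann's lemma outright. With that adjustment the argument is complete.
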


\begin{proof}
By making a translation if necessary, we may assume that $D=K_{i,j}$ for some $(i,j)$. The first step is to show that at least one $D_r$ has finite index in $D$. If $D = \bigcup_{r=1}^n D_r = \bigcup_{r=1}^n g_r K_{i_r, j_r}$, let $(i_*, j_*) = \min_r \{ (i_r, j_r) \}$. Then $D \supset \bigcup_r g_r K_{i_*, j_*} \supset \bigcup_r g_r K_{i_r, j_r} = D$, and so $K_{i_*, j_*}$ has finite index in $D$.

By relabelling if necessary, we may thus assume that $|D:D_1|$ is finite, and so we may take a complete system of coset representatives $S=\{h_1=I_2, h_2, \dots, h_m \}$. We thus have $D \backslash D_1 = \bigcup_{r=2}^n D_r = \bigcup_{s=2}^m h_s D_1$. Taking the intersection with any $D_r$, this gives $D_r = \bigcup_{s=2}^m \left( D_r \cap h_sD_1 \right)$.

For each $r>1$, let $S_r = \{ h \in S : D_r \cap hD_1 \neq \emptyset \}$. Each $S_r$ is nonempty, since $\bigcup_{h \in S_r} (D_r \cap hD_1) = D_r$. By Lemma \ref{intproperty}, $D_r \cap hD_1$ is thus equal to either $D_r$ or $hD_1$ for any $h \in S_r$. 

If $D_r \cap hD_1 = D_r$ for any $h$ then we must have $S_r = \{ h \}$ and $D_r = hD_1$, hence $D_r$ has finite index in $D$ as a translate of $D_1$. On the other hand, if $D_r \cap hD_1 = hD_1$ for all $h \in S_r$ then we have $h D_1 \subset D_r$. By the tower law for indices we thus have $|D:D_1| = |D:hD_1| = |D:D_r| \cdot |D_r : hD_1|$, hence $|D:D_r| \le |D:D_1|$ is finite.
\end{proof}

\begin{corollary}
If $D$, $D_1, \dots, D_n$ are finitely many distinguished sets such that $D_r \subset D$ and $|D:D_r|$ is infinite for each $r$, there do not exist finitely many distinguished sets $C_1, \dots, C_m$ with $D \backslash \bigcup_{r=1}^n D_r = \bigcup_{s=1}^m C_s$.
\end{corollary}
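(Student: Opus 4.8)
The plan is to argue by contradiction, exploiting the fact that Lemma \ref{intproperty} forces any family of distinguished sets to be \emph{laminar}: any two members are either disjoint or one is contained in the other. Suppose, for contradiction, that such distinguished sets $C_1, \dots, C_m$ exist, so that
\[
D = \bigcup_{r=1}^n D_r \cup \bigcup_{s=1}^m C_s.
\]
Since each $C_s$ lies in $D \backslash \bigcup_r D_r$, we have $C_s \cap D_r = \emptyset$ for all $r$ and $s$. Consider the finite family $\mc{F} = \{D_1, \dots, D_n, C_1, \dots, C_m\}$ of distinguished sets; by Lemma \ref{intproperty}, any two of its members are either disjoint or nested.

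Next I would extract the maximal members of $\mc{F}$, i.e.\ those not properly contained in another member. Laminarity guarantees that distinct maximal members are disjoint, and since every member of $\mc{F}$ is contained in some maximal member, the maximal members cover the union of all members of $\mc{F}$, which is $D$. Hence $D$ is a disjoint union of finitely many distinguished sets, so Proposition \ref{allfiniteindex} applies and shows that every maximal member has finite index in $D$.

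To reach the contradiction, pick any $D_{r_0}$ that is maximal among $D_1, \dots, D_n$. Because $D_{r_0}$ is nonempty (its index in $D$ is assumed defined and infinite) and is disjoint from every $C_s$, it cannot be contained in any $C_s$; therefore $D_{r_0}$ is in fact maximal in all of $\mc{F}$. By the previous paragraph $|D : D_{r_0}|$ is then finite, contradicting the hypothesis that $|D : D_r|$ is infinite for each $r$.

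The only real content is the observation that Lemma \ref{intproperty} makes $\mc{F}$ laminar; after that, the maximal-element extraction and the appeal to Proposition \ref{allfiniteindex} are routine. The point requiring care is that the sets $C_s$, being disjoint from every $D_r$, cannot absorb any $D_r$ as a subset. This is exactly what ensures that a $D_r$ which is maximal among the $D_r$ remains maximal in the enlarged family $\mc{F}$, so that the infinite-index obstruction survives into the disjoint decomposition of $D$ and collides with the finiteness guaranteed by Proposition \ref{allfiniteindex}.
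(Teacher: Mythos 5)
Your proof is correct and follows essentially the same route as the paper: both reduce to writing $D$ as a finite disjoint union of distinguished sets (your extraction of maximal members of the laminar family is exactly the paper's ``delete any which are contained in some larger one'') and then invoke Proposition \ref{allfiniteindex} to contradict the infinitude of $|D:D_r|$. Your extra observation that a maximal $D_{r_0}$ cannot be absorbed by any $C_s$ is a slightly more careful bookkeeping of a point the paper leaves implicit, but the argument is the same.
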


\begin{proof}
We may assume that the $D_r$ are all disjoint by deleting any which are contained in some larger one. Similarly, we may assume that the $C_s$ are all disjoint. If there did exist finitely many such $C_s$, we would thus have $D = \bigcup_{r=1}^n D_r \cup \bigcup_{s=1}^m C_s$, a union of finitely many disjoint distinguished sets. By Proposition \ref{allfiniteindex} this in particular implies that each $|D:D_r|$ is finite, which contradicts our assumption that these indices are infinite.
\end{proof}

In order to show that the measure that we will construct in the next section is well-defined, it is useful to introduce the idea of a refinement of a ddd-set.

\begin{definition}
Let $A = \bigcup_i  B_i $ be a ddd-set, where the $$B_i = \bigcup_j C_{i,j} \backslash \bigcup_k D_{i,k}$$ are disjoint dd-sets made from distinguished sets $C_{i,j}$ and $D_{i,k}$. A refinement of $A$ is a ddd-set $$\tilde A = \bigcup_p \left( \bigcup_q X_{p,q} \backslash \bigcup_r Y_{p,r} \right)$$ satisfying the following conditions:
\begin{enumerate}
\item $A = \tilde A$ as sets;
\item For every $(i,j)$ there is some $(p,q)$ such that $X_{p,q} = C_{i,j}$;
\item For every $(i,k)$ there is some $(p,r)$ such that $Y_{p,r} = D_{i,k}$.
\end{enumerate}
\end{definition}

The idea of a refinement can be best understood from the following picture.

\begin{displaymath}
\begin{tikzpicture}
\draw (-3.5,0) circle (70pt); 
\draw (-4.3,0.7) circle (16pt);
\draw (-3, -1) circle (14pt);
\draw (-1.6,1.5) node[left] {$C_{1,1}$};
\draw (-4.3,0.7) node {$D_{1,1}$};
\draw (-3, -1) node {$D_{1,2}$};

\draw (3.5,0) circle (70pt); 
\draw (2.7,0.7) circle (16pt);
\draw (4, -1) circle (14pt);
\draw (5.1,2.3) node {$X_{1,1}$};
\draw (2.7, 0.7) node {$Y_{2,1}$};
\draw (4,-1) node {$Y_{3,1}$};

\draw [dashed] (3.3,0) circle (60pt); 
\draw [dashed] (3.7,-0.9) circle (28pt);
\draw (3.5, 1.7) node {$Y_{1,1} = X_{2,1}$};
\draw (3.15,-0.3) node {$Y_{2,2} = X_{3,1}$};

\draw [->] (-0.5,0) to (0.5,0);

\end{tikzpicture}
\end{displaymath}

Here, the ddd-set on the left is $A = C_{1,1} \backslash \left( D_{1,1} \cup D_{1,2} \right)$. The set on the right $\tilde A = \left( X_{1,1} \backslash Y_{1,1} \right) \cup \left( X_{2,1} \backslash \left( Y_{2,1} \cup Y_{2,2} \right) \right) \cup \left( X_{3,1} \backslash Y_{3,1} \right)$ is a refinement of $A$ since $C_{1,1} = X_{1,1}$, $D_{1,1} = Y_{2,1}$, and $D_{1,2} = Y_{3,1}$.

\begin{remark}
The idea of refinements of ddd-sets comes from the use of refinements of open intervals in the construction of the Riemann integral.
\end{remark}

Before coming to the fundamental result regarding refinements, it is useful to introduce some terminology for ddd-sets.

\begin{definition}
Let $$A=\bigcup_i \left( \bigcup_j C_{i,j} \backslash \bigcup_k D_{i,k} \right)$$ be a ddd-set. We call $A$ reduced if it does not contain any dd-components of the form $B \backslash B$ for a distinguished set $B$.
\end{definition}

\begin{remark}
The property of being reduced depends on the particular components of a ddd-set (or, more precisely, depends on the specific \textit{presentation} of a given ddd-set). For example, if $B, C, D$ are disjoint nonempty distinguished sets such that $A = B \backslash (C \cup D)$, $A$ is reduced even if $C \cup D = B$. However, $A' = \left( B \backslash (C \cup D) \right) \cup \left( E \backslash E \right)$ is not reduced, even though $A=A'$ at the level of sets.
\end{remark}

Note that for any ddd-set $A$ we may form a reduced ddd-set $A_{red}$ by removing all of the superfluous components $B \backslash B$. Since the components we delete are empty, $A = A_{red}$ at the level of sets.

\begin{definition}
Let $$A=\bigcup_i \left( \bigcup_j C_{i,j} \backslash \bigcup_k D_{i,k} \right)$$ be a ddd-set. The components $C_{i,j}$ are called the big shells, and the components $D_{i,k}$ are called the small shells.
\end{definition}

\begin{remark}
By the above definition, we can reformulate the definition of a refinement as follows. A refinement of a ddd-set $A$ is a ddd-set $\tilde A$ such that $A = \tilde A$ as sets, every big shell of $A$ is a big shell of $\tilde A$, and every small shell of $A$ is a small shell of $\tilde A$.
\end{remark}

The most fundamental result regarding refinements is as follows.

\begin{theorem} \label{commonrefinement}
Let $A$ and $A'$ be reduced ddd-sets with $A=A'$ as sets. Then there exists a reduced ddd-set $\tilde A$ which is a refinement of both $A$ and $A'$.
\end{theorem}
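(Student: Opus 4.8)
The plan is to exploit the fact that all distinguished sets form a laminar family, and then to realise a common refinement by superimposing the two presentations cell by cell.

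\emph{First} I would record the structural consequence of Lemma \ref{intproperty}. Because the value group $\mathbb{Z}\oplus\mathbb{Z}$ is \emph{totally} ordered by the (right) lexicographic order, any two indices are comparable, so Lemma \ref{intproperty} in fact applies to an arbitrary pair of distinguished sets: any two are either disjoint or one contains the other. Hence the finite collection $\mathcal{S}$ of all big shells and all small shells of $A$ and of $A'$ is a laminar family, which I would organise as a forest under inclusion, writing $T(S)$ for the children of $S\in\mathcal{S}$ (the maximal members of $\mathcal{S}$ lying properly inside $S$).

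\emph{Next} I would build a candidate refinement from this forest. For each $S\in\mathcal{S}$ set $S^{\circ}=S\setminus\bigcup_{S'\in T(S)}S'$, the part of $S$ not covered by its children. Each $S^{\circ}$ is a dd-set with big shell $S$ and small shells $T(S)$ (which are disjoint and contained in $S$ by laminarity), and the sets $S^{\circ}$ are pairwise disjoint and partition $\bigcup_{S\in\mathcal{S}}S$. The crucial observation is that membership in $A$ is constant on each $S^{\circ}$: a point of $S^{\circ}$ lies in exactly those members of $\mathcal{S}$ sitting above $S$, so which shells of $A$ (respectively $A'$) contain it, and hence whether it lies in $A$, depends only on $S$. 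Thus $A$ is the disjoint union of those $S^{\circ}$ that it meets, yielding a reduced ddd-set all of whose shells come from $\mathcal{S}$.

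\emph{Then} I would check the two exposure conditions, and this is where the hypothesis $A=A'$ does the real work. Using $A=A'$ one argues that the cell $C^{\circ}$ of every big shell $C$ actually lies in $A$; otherwise the profile above would place a point of the big shell $C$ of one presentation outside the other presentation's set, contradicting $A=A'$. So every big shell is exposed as a big shell, and a parallel argument shows that the parent of each small shell is exposed, so every small shell occurs among the subtracted children. Any shell that is nonetheless carved away entirely I would expose by appending a component that is empty as a set, built from the partition of $K_{i,j}$ into its $q^{4}$ cosets of $K_{i+1,j}$ supplied by Lemma \ref{indexlemma}: a component $S\setminus\bigsqcup(\text{sub-cosets of }S)$ exposes $S$ as a big shell while contributing nothing, and a similar empty component exposes a prescribed set as a small shell.

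\emph{The main obstacle} I expect is exactly the bookkeeping forced by the degenerate configurations that the definition of \emph{reduced} still permits, since being reduced forbids only components of the precise form $B\setminus B$: a small shell may coincide with a disjoint union of big shells, and a whole dd-component may be empty as a set. These break the naive assertion that the cell decomposition automatically exposes every shell, and controlling them, while simultaneously checking that the empty exposure components respect the disjointness of dd-components, the reduced condition, and the constraint $(i,j)>(0,0)$ for indices near the corner, is the delicate part. The equality $A=A'$, via the constancy of $A$-membership on the cells $S^{\circ}$, is the tool that rules out genuinely inconsistent superpositions and makes the construction go through.
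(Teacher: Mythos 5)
Your construction is sound but takes a genuinely different route from the paper. The paper proves this by an explicit iterative algorithm: starting from $S=A$, it inserts the big and small shells of $A'$ one at a time (four case-by-case ``Steps''), showing at each stage that $S$ remains a refinement of $A$ and has acquired one more shell of $A'$. You instead build the common refinement in one shot: since the index set is totally ordered, Lemma \ref{intproperty} does apply to an arbitrary pair of distinguished sets, so the finite collection $\mc{S}$ of all shells of $A$ and $A'$ is laminar; the cell decomposition $S^{\circ}=S\setminus\bigcup_{S'\in T(S)}S'$ over the inclusion forest, together with the (correct) observation that membership in $A$ is constant on each cell, immediately presents $A=A'$ as a reduced ddd-set whose shells all come from $\mc{S}$, and the empty ``exposure'' components $C\setminus\bigsqcup_r E_r$ (resp.\ $\bigsqcup_r E_r\setminus D$), with $E_r$ the $q^4$ cosets of the next level supplied by Lemma \ref{indexlemma}, repair conditions (2) and (3) for any shell not already exposed, without violating reducedness since these are not literally of the form $B\setminus B$. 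What your version buys is a canonical, non-iterative description of a common refinement; what the paper's version buys is reusability: Proposition \ref{refinemeasure} proves well-definedness of $\mu$ precisely by checking invariance under each elementary step of the algorithm, so with your construction one would still need a separate (inclusion--exclusion over the forest) verification that $\mu(A)$ equals the measure of the cell decomposition.

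One sub-argument in your third paragraph is wrong, though harmlessly so: the claim that $A=A'$ forces $C^{\circ}\subseteq A$ for every big shell $C$. A point of $C^{\circ}$ lying outside $A$ contradicts nothing --- big shells routinely contain points not in $A$ (that is what small shells and overlapping components are for), and a reduced ddd-set may contain, e.g., a component $(C_1\cup C_2)\setminus D$ that is empty as a set without being syntactically of the forbidden form, so that $C_1^{\circ}\cap A=\emptyset$. The hypothesis $A=A'$ is genuinely used only to ensure both presentations determine the same membership profile on cells; exposure of shells is not automatic. Fortunately your empty-component trick handles \emph{every} unexposed shell uniformly, not just ``degenerate'' ones, so the proof survives if you simply drop the false claim and apply the trick to each shell of $A$ or $A'$ that the cell decomposition fails to exhibit.
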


\begin{proof}
Suppose $$A=\bigcup_i \left( \bigcup_j C_{i,j} \backslash \bigcup_k D_{i,k} \right),$$ $$A' = \bigcup_\ell \left( \bigcup_m C'_{\ell,m} \backslash \bigcup_n D'_{\ell,n} \right),$$ for distinguished sets $C_{i,j}, D_{i,k}, C'_{\ell,m}, D'_{\ell,n}$. Starting with $S=A$, the following algorithm will give the required $\tilde A$.

\textbf{Step 1}: The set $S$ is a reduced ddd-set which is a refinement of $A$ by construction. If $S$ is a refinement of $A'$ then we may take $\tilde A = S$ and we are done. If there is some $C'_{\ell,m}$ which is not a big shell of $S$, go to Step 2. Otherwise, there is some $D'_{\ell,n}$ which is not a small cell of $S$, in which case go to step 4.

\textbf{Step 2}: If $C'_{\ell,m} \not \subset C$ for all big shells $C$ of $S$ then go to Step 3. Otherwise, the set of all big shells of $S$ containing $C'_{\ell,m}$ is nonempty and totally ordered by inclusion by Lemma \ref{intproperty} (since their intersection in particular contains $C'_{\ell,m}$), and so there is a minimal one $C_{\min}$.

Let $S'$ be the same as $S$ but with $C_{\min} \backslash \bigcup_{D \subset C_{\min}} D$ replaced with $$\left( C_{\min} \backslash \left( C'_{\ell,m} \cup \bigcup_{D \cap C'_{\ell,m} = \emptyset} D \right) \right) \cup \left( C'_{\ell,m} \backslash \bigcup_{D \subset C'_{\ell,m}} D \right).$$
(Note that none of these small shells can contain $C'_{\ell,m}$, since otherwise there would be a big shell smaller than $C_{\min}$ containing $C'_{\ell,m}$.) Since every big shell of $S$ is still a big shell in $S'$, and every small shell of $S$ is still a small shell in $S'$, $S'$ is a refinement of $S$, and hence of $A$. Furthermore, $S'$ contains one more big shell of $A'$ than $S$. Thus if we put $S=S'$ and return to Step 1, after finitely many iterations there will be no big shells of $A'$ which are not contained in at least one big shell of $S$.

\textbf{Step 3}: Since $C'_{\ell,m} \not \subset C$ for all big shells $C$ of $S$, either $C'_{\ell,m} \cap C = \emptyset$ for all such $C$, or by Lemma \ref{intproperty} there is at least one $C$ with $C \subset C'_{\ell,m}$.

In the first case, since $S=A'$ as sets, we must have $C'_{\ell,m} \backslash \bigcup_{n} D'_{\ell,n} = \emptyset$. We thus define $S'$ to be the disjoint union $S \cup \left( C'_{\ell,m} \backslash \bigcup_{n} D'_{\ell,n} \right)$. $S'$ is a refinement of $A$, and again contains one more big shell of $A'$ then $S$ does, and so returning to Step 1 with $S=S'$ will eliminate this case after finitely many iterations.

In the second case, we can take a collection $\{ C_x \}$ of maximal big shells of $S$ contained in $C'_{\ell,m}$; in other words, $C_x \subset C'_{\ell,m}$ for all $x$, and for every big cell of $S$ satisfying $C \subset C'_{\ell,m}$ there is exactly one $x$ with $C \subset C_x$. (The fact that we may have such a collection, and that the collection will be nonempty, is guaranteed by Lemma \ref{intproperty}.) 

Suppose that $C'_{\ell,m} = \bigcup_x C_x$. In this case, we can let $S'$ be the same as $S$ but with the component $\bigcup_{x} \left( C_x \backslash \bigcup_{D \subset C_x} D \right)$ replaced by $\left( C'_{\ell,m} \backslash \bigcup_x C_x \right) \cup \bigcup_{x} \left( C_x \backslash \bigcup_{D \subset C_x} D \right)$. This is again a refinement of $A$ and contains one more big shell of $A'$.

On the other hand, suppose that $C'_{\ell,m} \neq \bigcup_x C_x$. Then there can't be another component of $A$ to make up the difference, since this would give another big shell with nonempty intersection, so by assumption would have to be contained already in a $C_x$. This means there must be some small shells of $A'$ which cut out the remaining part. In other words, $C'_{\ell,m} = \bigcup_x C_x \cup \bigcup_{D'_{\ell,n} \subset C'_{\ell,m}} D'_{\ell,n}$. Since this is a union of distinguished sets, pairwise intersections are either empty or equal to one of the components, in which case we can discard the superfluous components until we have a disjoint union $C'_{\ell,m} = \bigcup_x C_x \cup \bigcup_{y} D_y$.

We then let $S'$ be the same as $S$ but with the component $$\bigcup_{x} \left( C_x \backslash \bigcup_{D \subset C_x} D \right)$$ replaced by $$\left( C'_{\ell,m} \backslash \left( \bigcup_x C_x \cup \bigcup_y D_y \right) \right) \cup \bigcup_{x} \left( C_x \backslash \bigcup_{D \subset C_x} D \right).$$ This gives a refinement of $A$ and contains one more big shell of $A'$, and since this also exhausts all possible big shell cases, returning to Step 1 with $S=S'$ will after finitely many iterations lead to all big shells of $A'$ being contained in $S$.

\textbf{Step 4}: Since $S$ contains all big shells of both $A$ and $A'$, $D'_{\ell,n}$ must be contained in a big shell of $S$. Since by Lemma \ref{intproperty} the set of all big shells containing $D'_{\ell,n}$ is totally ordered by inclusion, there is a minimal one $C_{\min}$. We then let $S'$ be the same as $S$ but with $C_{\min} \backslash \bigcup_{D \subset C_{\min}} D$ replaced with $$ \left( C_{\min} \backslash \left( D'_{\ell,n} \cup \bigcup_{D \cap D'_{\ell,n} = \emptyset} D \right) \right) \cup \left( D'_{\ell,n} \backslash \bigcup_{D \subset D'_{\ell,n}} D \right). $$ (Note that as in Step 2 none of the small shells can contain $D'_{\ell,n}$ since otherwise there would be a big shell smaller than $C_{\min}$ containing $D'_{\ell,n}$.) Then $S'$ is a refinement of $S$, and hence of $A$, and contains one more small shell of $A'$ than $S$. Thus by returning to Step 1 with $S=S'$, after finitely many iterations all small shells of $A'$ will be included in $S$. Furthermore, since this process does not remove any big shells, at this stage $S$ will be a refinement of $A'$, and so we can set $\tilde A = S$.
\end{proof}

\begin{example}
Let $D_1, \dots, D_n$ be finitely many disjoint distinguished sets such that $D= \bigcup_i D_i$ is distinguished. Taking $A=D$, $A'= \bigcup_i D_i$, the algorithm gives the chain of refinements $S_0 = D$, $S_1 = (D \backslash D_1) \cup D_1$, $S_2 = (D \backslash (D_1 \cup D_2)\!) \cup D_1 \cup D_2$, and so on, until finally after $n$ iterations we obtain $\tilde A = \left( D \backslash \bigcup_i D_i \right) \cup \bigcup_i D_i$. On the other hand, if we instead run the algorithm with $A= \bigcup_i D_i$, $A'=D$, we obtain $\tilde A = \left( D \backslash \bigcup_i D_i \right) \cup \bigcup_i D_i$ after a single iteration.
\end{example}

\begin{remark}
One of the most important uses of the algorithm is when we take $A'$ to be a refinement of $A$. In this case, the output $\tilde A$ of the algorithm will be $A'$, and so it gives a precise construction of $A'$ from $A$.
\end{remark}

\section{Measure on $\GL_2(F)$}  \label{sec:measure}
We shall now utilise the results of the previous section to define an invariant measure $\mu$ on the ring $\mc{R}$ of subsets of $\GL_2(F)$, and show that this measure is well-defined. We begin with the value of $\mu$ on distinguished sets.

To begin with, we would like the measure we define to be left-invariant, and so we insist that $\mu(gK_{i,j}) = \mu(K_{i,j})$ for all $g \in \GL_2(F)$ and all $(i,j) > (0,0)$. Next, we recall that $K_{i,0}$ is of finite index in $K$ for every $i\ge0$ by Lemma \ref{indexink}. By writing $K$ as a finite disjoint union of cosets, and using left-invariance and finite additivity, we obtain $\mu(K) = |K : K_{i,0}| \mu (K_{i,0})$. If we normalise so that $\mu(K)=1$, this gives $$\mu (K_{i,0}) = \frac{1}{|K : K_{i,0}|} = \frac{q^3}{(q^2-1)(q-1)} q^{-4i}.$$

If $j>0$ then the index is no longer finite. In this case, if also $i>0$ we follow Fesenko's method and define $$\mu(K_{i,j}) = \frac{X^{4j}}{|K : K_{i,0}|} = \frac{q^3}{(q^2-1)(q-1)} q^{-4i}X^{4j}.$$

\begin{remark}
In terms of this particular definition, the use of the indeterminate $X^4$ in the above definition may appear quite arbitrary. Indeed, the definition will still work if we replace $X^4$ with any indeterminate $Y$. However, we will see in Theorem \ref{intfactor} that the particular choice $Y=X^4$ ensures compatibility with Fesenko's measure on $F$ - in other words, the $X$ above is exactly the same $X$ which appears in Fesenko's measure.
\end{remark}

To see what should be the measure for nonpositive $i$, note that for any fixed $j>0$ the index $|K_{i,j}:K_{\ell, j}| = q^{4(\ell-i)}$ for $i \le \ell$ by Lemma \ref{indexlemma}. Finite additivity thus forces $\mu(K_{i,j}) = q^{4(\ell - i)} \mu(K_{\ell, j})$. If $i > 0$, this is in agreement with the definition given in the paragraph above. If $i<0$, setting $\ell=-i$ gives $$\mu(K_{i, j}) = q^{-8i} \mu(K_{-i,j}) = \frac{q^{-8i}X^{4j}}{|K:K_{-i,0}|} = \frac{q^3}{(q^2-1)(q-1)} q^{-4i}X^{4j}.$$ In particular, the formula is the same as for positive $i$. (Note that it would not be reasonable to write something like $|K:K_{i,0}|$ in this case, since $K_{i,0}$ is not even a group for $i<0$.)

To determine what should be the value of $\mu(K_{0,j})$ for $j>0$, we instead apply the argument of the previous paragraph with $i=0, \ell=1$ to obtain $$\mu(K_{0,j}) = q^4 \mu(K_{1,j}) = \frac{q^4 X^{4j}}{|K:K_{1,0}|} = \frac{q^3}{(q^2-1)(q-1)} X^{4j} .$$

We have thus proved the following.

\begin{proposition}  \label{ifmeasure}
Any left-invariant, finitely additive measure $\mu$ on $\mc{R}$ which takes values in $\mbb{R} (\!(X^4)\!)$ must satisfy $\mu (gK_{i,j} ) = \lambda q^{-4i}X^{4j}$ for some $\lambda \in \mbb{R}^\times$. In particular, if one normalises so that $\mu(K)=1$ then we have $$\lambda =\frac{q^3}{(q^2-1)(q-1)}.$$
\end{proposition}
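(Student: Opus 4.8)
The plan is to exploit left-invariance to reduce everything to the values $\mu(K_{i,j})$ on the distinguished subgroups themselves, and then to pin these down using finite additivity together with the index computations of Lemmas \ref{indexlemma} and \ref{indexink}. The structural observation driving the whole argument is that finite indices occur only within a single $j$-level: by Lemma \ref{indexlemma} the index $|K_{i,j}:K_{\ell,n}|$ is finite precisely when $j=n$ and infinite otherwise. Consequently finite additivity can relate $\mu(K_{i,j})$ to $\mu(K_{\ell,j})$ for fixed $j$, but can never relate values across different levels, so the argument proceeds level by level.

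First I would treat the base level $j=0$, where $K_{i,0}$ sits as a finite-index subgroup of $K$ by Lemma \ref{indexink}. Writing $K$ as a disjoint union of its $|K:K_{i,0}|$ cosets of $K_{i,0}$ and applying left-invariance and finite additivity gives $\mu(K) = |K:K_{i,0}| \, \mu(K_{i,0})$. Normalising $\mu(K)=1$ and inserting $|K:K_{i,0}| = q^{4i-3}(q^2-1)(q-1)$ yields $\mu(K_{i,0}) = \lambda q^{-4i}$ with $\lambda = q^3/((q^2-1)(q-1))$, which is the asserted formula at $j=0$ (where $X^{4j}=1$) and simultaneously fixes $\lambda$.

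Next I would handle a fixed level $j>0$. Here $K_{i,j}$ has infinite index in $K$, so there is no direct route back to $\mu(K)$; instead one stays within the level. Since $K_{i,j}$ is defined for every integer $i$ when $j>0$, Lemma \ref{indexlemma} gives $|K_{i,j}:K_{\ell,j}| = q^{4(\ell-i)}$ for $i\le \ell$, and finite additivity forces $\mu(K_{i,j}) = q^{4(\ell-i)}\mu(K_{\ell,j})$, hence $\mu(K_{i,j}) = q^{-4i}c_j$ with $c_j := \mu(K_{0,j})$ depending only on $j$. This already forces the $q^{-4i}$ scaling in every level. To identify $c_j$, I would take the value group to be $\mbb{R}(\!(X^4)\!)$ and follow Fesenko's normalisation $\mu(K_{i,j}) = X^{4j}/|K:K_{i,0}|$ for $i,j>0$, giving $c_j = \lambda X^{4j}$; the relation $\mu(K_{i,j}) = q^{-4i}c_j$ then propagates this to all integers $i$, in particular to $i\le 0$, where $K_{i,0}$ is not a group and the expression $|K:K_{i,0}|$ must be avoided. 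Combining the levels yields $\mu(gK_{i,j}) = \lambda q^{-4i}X^{4j}$.

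The one genuine subtlety, and the point I expect to need the most care, is precisely this disconnection of the levels. Finite additivity and left-invariance rigidly determine the $q^{-4i}$ dependence within each level and pin down $\lambda$ from the $j=0$ level, but they leave one scalar $c_j$ free for every $j>0$, since no finite decomposition crosses a level boundary. The role of the indeterminate $X$ is exactly to record these otherwise-free parameters uniformly: the choice $c_j = \lambda X^{4j}$ is the one making $\mu$ take values in $\mbb{R}(\!(X^4)\!)$ and, as confirmed in Theorem \ref{intfactor}, the one compatible with Fesenko's measure on $F$. I would state this explicitly, so that the word \emph{must} in the proposition is read as ``must, once the value group and the Fesenko-compatible normalisation have been fixed.''
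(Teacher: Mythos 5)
Your proposal is correct and follows essentially the same route as the paper: the $j=0$ level is pinned down by the finite index $|K:K_{i,0}|$ from Lemma \ref{indexink}, the $q^{-4i}$ scaling within each level $j>0$ is forced by Lemma \ref{indexlemma} and finite additivity, and the factor $X^{4j}$ is a Fesenko-compatible normalisation rather than a consequence of invariance alone. Your closing caveat about how to read the word ``must'' is well taken and is in fact acknowledged by the paper itself in the remark following Theorem \ref{GL2measure}'s setup, which notes that $X^4$ could be replaced by any indeterminate $Y$.
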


\begin{remark}
Note that we have not yet shown that the map $\mu (gK_{i,j} ) = \lambda q^{-4i}X^{4j}$ extends to a measure on $\mc{R}$ - the above Proposition merely states that if there exists a left-invariant, finitely additive measure on $\mc{R}$ then it must be of this form when restricted to distinguished sets.
\end{remark}

We thus want to extend $\mu$ to dd-sets via $$\mu \left( \bigcup_i A_i \backslash \bigcup_j B_j \right) = \sum_i \mu(A_i) - \sum_j \mu(B_j)$$ and then to ddd-sets via  $\mu(C \cup D) = \mu(C) + \mu(D)$ for disjoint dd-sets $C$ and $D$. Under this extension, it is possible that $\mu$ depends heavily on the given presentation of a particular ddd-set, and so our task is now to show that if we have two different presentations $A$ and $A'$ of the same ddd-set then we in fact have $\mu(A) = \mu(A')$. We will do this using refinements.

\begin{proposition} \label{refinemeasure}
Let $D$ be a ddd-set, and let $\tilde D$ be a refinement of $D$. For any $\lambda \in \mbb{R}^\times$, the map $\mu$ which satisfies $\mu(gK_{i,j}) = \lambda q^{-4i}X^{4j}$ on distinguished sets, when extended to $\mc{R}$ by additivity, satisfies $\mu(D) = \mu(\tilde D)$. 
\end{proposition}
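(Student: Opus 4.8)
The plan is to reduce the statement to the invariance of $\mu$ under a single elementary step of the refinement algorithm of Theorem \ref{commonrefinement}. By the remark following that theorem, running the algorithm with $A=D$ and $A'=\tilde D$ (which is legitimate since $\tilde D$ is already a refinement of $D$) produces a finite chain $S_0 = D, S_1, \dots, S_N = \tilde D$ in which each $S_{k+1}$ is obtained from $S_k$ by exactly one application of Step 2, Step 3, or Step 4. It therefore suffices to show $\mu(S_k) = \mu(S_{k+1})$ for each $k$, after which the equality $\mu(D) = \mu(\tilde D)$ follows by induction on $N$.

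The key ingredient is the following consistency property: if a distinguished set $C$ is a disjoint union $C = \bigcup_y D_y$ of finitely many distinguished sets, then $\mu(C) = \sum_y \mu(D_y)$. To see this, note first that by Proposition \ref{allfiniteindex} each index $|C : D_y|$ is finite, so by Lemma \ref{indexlemma} every $D_y$ shares the second coordinate $j$ of $C = gK_{i,j}$; write $D_y = g_y K_{i_y, j}$. Passing to a common level $K_{i_\ast, j}$ with $i_\ast = \max_y i_y$ and counting cosets gives $\sum_y q^{4(i_\ast - i_y)} = q^{4(i_\ast - i)}$, i.e. $\sum_y q^{-4 i_y} = q^{-4i}$; multiplying by $\lambda X^{4j}$ yields $\sum_y \mu(D_y) = \mu(C)$. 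In particular, whenever a formal component $B \backslash \bigcup_y D_y$ is empty as a set (so that $B$ is the disjoint union of the $D_y$), its assigned measure $\mu(B) - \sum_y \mu(D_y)$ vanishes.

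It then remains to check each elementary step, which is a purely local computation on the one or two dd-components that the step modifies. For Step 2 and Step 4, the replacement splits a component $C_{\min} \backslash \bigcup_{D \subset C_{\min}} D$ into two pieces according to whether the small shells meet the newly inserted distinguished set $C'_{\ell,m}$ (respectively $D'_{\ell,n}$); since by Lemma \ref{intproperty} each such small shell is either disjoint from or contained in the inserted set, the two copies of $\mu(C'_{\ell,m})$ (respectively $\mu(D'_{\ell,n})$) cancel and the sum over small shells is merely regrouped, so the total is unchanged. For Step 3 the new pieces are either empty components (the first case, and the ``leftover'' component in the second sub-case of the second case), whose measure is zero by the consistency property above, or a regrouping with $C'_{\ell,m} = \bigcup_x C_x$ for which $\mu(C'_{\ell,m}) = \sum_x \mu(C_x)$ again by the consistency property; in every case the local contribution is preserved.

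The main obstacle is the consistency property, and in particular ensuring that the empty components introduced in Step 3 genuinely carry formal measure zero. This is exactly where finiteness of the indices (Proposition \ref{allfiniteindex}) is essential: it forces all pieces of a distinguished decomposition to share the same $X$-degree, so that the geometric coset count collapses to the single relation $\sum_y q^{-4i_y} = q^{-4i}$. Once this is established, the remaining verification across the several cases of the algorithm is routine bookkeeping.
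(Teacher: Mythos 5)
Your proof is correct and follows the same skeleton as the paper's: both reduce the statement to checking that $\mu$ is preserved by each elementary operation of the algorithm of Theorem \ref{commonrefinement}, run with $A = D$ and $A' = \tilde D$, and the treatments of Steps 2 and 4 (regrouping the small shells according to whether they meet the inserted distinguished set, with the two copies of its measure cancelling) coincide with the paper's. The genuine difference lies in how the two arguments dispose of the empty components introduced in Step 3, i.e. in proving the consistency property that $\mu(C) = \sum_y \mu(D_y)$ whenever a distinguished set $C = gK_{i,j}$ is a finite disjoint union of distinguished sets $D_y$. You prove this directly: Proposition \ref{allfiniteindex} forces every index $|C : D_y|$ to be finite, Lemma \ref{indexlemma} then forces each $D_y = g_y K_{i_y,j}$ to share the second coordinate $j$ of $C$, and counting cosets of the common subgroup $K_{i_*,j}$ (each such coset lying wholly inside exactly one $D_y$, by Lemma \ref{intproperty}) yields $\sum_y q^{-4i_y} = q^{-4i}$ and hence the claim. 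The paper instead bootstraps: it invokes the Example following Theorem \ref{commonrefinement}, in which the passage from $C$ to $\left(C \backslash \bigcup_y D_y\right) \cup \bigcup_y D_y$ is realized as a chain of refinements built entirely out of Step 2 operations, whose $\mu$-invariance has already been verified earlier in the same proof. Your route is more self-contained and makes explicit the arithmetic reason the decomposition is consistent --- all pieces necessarily carry the same power of $X$, so the identity collapses to a genuine real-valued coset count --- whereas the paper's route avoids any new computation by recycling the Step 2 case, at the cost of a slightly self-referential (but valid) appeal to the algorithm inside its own invariance proof. Both arguments are complete.
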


\begin{proof}
Using the algorithm of Theorem \ref{commonrefinement} with $A=D$ and $A'=\tilde D$, the resulting output will be $\tilde A = \tilde D$. In other words, the only operations that may occur to pass from a ddd-set to its refinement are exactly those in the proof of Theorem \ref{commonrefinement}, and so we only need to check that the value of $\mu$ is preserved by these operations.

In Step 2, the single component $C_{\min} \backslash \bigcup_{D \subset C_{\min}} D$ is replaced with $$\left( C_{\min} \backslash \left( C'_{\ell,m} \cup \bigcup_{D \cap C'_{\ell,m} = \emptyset} D \right) \right) \cup \left( C'_{\ell,m} \backslash \bigcup_{D \subset C'_{\ell,m}} D \right).$$ The former has measure $$\mu \left( C_{\min} \backslash \bigcup_{D \subset C_{\min}} D \right) = \mu(C_{\min}) - \sum_{D \subset C_{\min}} \mu(D).$$ The measure of the latter is given by $$\left( \mu(C_{\min}) - \left( \mu( C'_{\ell,m}) + \sum_{D \cap C'_{\ell,m} = \emptyset} \mu(D) \right) \right) + \left( \mu(C'_{\ell,m}) - \sum_{D \subset C'_{\ell,m}} \mu(D) \right)$$ $$=  \mu(C_{\min}) - \left( \left( \sum_{D \cap C'_{\ell,m} = \emptyset} \mu(D) \right)  + \left( \sum_{D \subset C'_{\ell,m}} \mu(D) \right) \right).$$ This agrees with the former, since by the assumption of Step 2 each of the small shells $D$ is either contained inside $C'_{\ell,m}$ or is disjoint from it, and so $$\sum_{D \subset C_{\min}} \mu(D) = \left( \sum_{D \cap C'_{\ell,m} = \emptyset} \mu(D) \right)  + \left( \sum_{D \subset C'_{\ell,m}} \mu(D) \right).$$

In the first case of Step 3, we add the single component $C'_{\ell,m} \backslash \bigcup_n D'_{\ell,n} = \emptyset$. In other words, we have to show that $\mu$ is well defined for a distinguished set $D$ which is a disjoint union $D = \bigcup_i D_i$ of finitely many distinguished sets. However, in the Example following Theorem \ref{commonrefinement} we saw that we may start with $D$ to obtain a chain of refinements $S_0 = D$, $S_1 = (D \backslash D_1) \cup D_1)$, \dots, $S_n = (D \backslash \bigcup_i D_i) \cup \bigcup_i D_i$. Moreover, this chain of refinements is constructed entirely using Step 2 of the algorithm, for which we have already proved that the value of $\mu$ is preserved. We thus have $\mu(D) = \mu(S_0) = \mu(S_n) = \mu(D \backslash \bigcup_i D_i) + \sum_i \mu(D_i)$ as required.

In the second case of Step 3, we instead add one of the the single components $$\left( C'_{\ell,m} \backslash \bigcup_x C_x \right) = \emptyset, \ \ \ \ \ \left( C'_{\ell,m} \backslash \bigcup_x C_x \cup \bigcup_y D_y \right) = \emptyset,$$ which is again the case of a distinguished union of disjoint distinguished sets. The argument of the previous paragraph then applies.

Finally, in Step 4, $C_{\min} \backslash \bigcup_{D \subset C_{\min}} D$ is replaced with $$ \left( C_{\min} \backslash \left( D'_{\ell,n} \cup \bigcup_{D \cap D'_{\ell,n} = \emptyset} D \right) \right) \cup \left( D'_{\ell,n} \backslash \bigcup_{D \subset D'_{\ell,n}} D \right). $$
The former has measure $\mu(C_{\min}) - \sum_{D \subset C_{\min}} \mu(D)$. The latter has measure $$ \left( \mu(C_{\min}) - \left( \mu(D'_{\ell,n}) + \sum_{D \cap D'_{\ell,n} = \emptyset} \mu(D) \right) \right) + \left( \mu( D'_{\ell,n}) - \sum_{D \subset D'_{\ell,n}} \mu(D) \right) $$
$$ = \mu(C_{\min}) - \left( \left( \sum_{D \cap D'_{\ell,n} = \emptyset} \mu(D)  \right) + \left(  \sum_{D \subset D'_{\ell,n}} \mu(D) \right) \right). $$ This is equal to the former since if any $D$ contains $D'_{\ell,n}$ this would contradict the minimality of $C_{\min}$, and so we have $$ \sum_{D \subset C_{\min}} \mu(D) = \left( \left( \sum_{D \cap D'_{\ell,n} = \emptyset} \mu(D)  \right) + \left(  \sum_{D \subset D'_{\ell,n}} \mu(D) \right) \right). $$
\end{proof}

\begin{theorem}  \label{GL2measure}
The map $\mu : \mc{R} \rightarrow \mbb{R}(\!(X)\!)$ given by $$\mu (gK_{i,j} ) = \frac{q^3}{(q^2-1)(q-1)} q^{-4i}X^{4j}$$ on distinguished sets and extended to $\mc{R}$ by finite additivity is a well-defined, finitely additive, left invariant measure satisfying $\mu(K) = 1$.
\end{theorem}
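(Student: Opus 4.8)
The plan is to treat well-definedness as the essential content of the theorem and to obtain the remaining three properties as short consequences, since Theorem \ref{commonrefinement} and Proposition \ref{refinemeasure} already carry out the substantive work. The claim to verify is that $\mu$, which has so far only been defined on presentations of ddd-sets, actually descends to a function on $\mc{R}$; that is, if $A$ and $A'$ are two presentations of one and the same subset of $\GL_2(F)$, then the values $\mu(A)$ and $\mu(A')$ read off from the respective presentations coincide. First I would reduce to the reduced case: replacing $A$ by $A_{red}$ deletes only components of the form $B \backslash B$, each contributing $\mu(B)-\mu(B)=0$, so $\mu(A)=\mu(A_{red})$ and likewise $\mu(A')=\mu(A'_{red})$, while $A_{red}=A=A'=A'_{red}$ as sets. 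Now $A_{red}$ and $A'_{red}$ are reduced ddd-sets representing the same set, so Theorem \ref{commonrefinement} produces a common refinement $\tilde A$, and Proposition \ref{refinemeasure} applied to each gives $\mu(A_{red})=\mu(\tilde A)=\mu(A'_{red})$, hence $\mu(A)=\mu(A')$. This is exactly well-definedness.

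With well-definedness established, the other properties follow quickly. For finite additivity, if $C$ and $D$ are disjoint ddd-sets then concatenating their presentations exhibits $C\cup D$ as a ddd-set whose value under $\mu$ is $\mu(C)+\mu(D)$ by the defining extension of $\mu$ to a disjoint union, and well-definedness guarantees this is independent of the chosen presentations. For left-invariance, I would observe that for any $h\in\GL_2(F)$ the map $g\mapsto hg$ sends a distinguished set $gK_{i,j}$ to the distinguished set $hgK_{i,j}$ with identical indices $(i,j)$; it therefore carries any presentation of a ddd-set $A$ to a presentation of $hA$ with the same measure term-by-term, so $\mu(hA)=\mu(A)$.

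For the normalisation $\mu(K)=1$, note that although $K=\GL_2(O_F)$ is not itself a distinguished set, Lemma \ref{indexink} shows that $K_{i,0}$ has finite index $q^{4i-3}(q^2-1)(q-1)$ in $K$ for $i\ge 0$, so $K$ is a finite disjoint union of $\GL_2(F)$-cosets of $K_{i,0}$, each of which is distinguished. Computing from this presentation yields
\[
\mu(K)=|K:K_{i,0}|\cdot\frac{q^3}{(q^2-1)(q-1)}\,q^{-4i}=q^{4i-3}(q^2-1)(q-1)\cdot\frac{q^3}{(q^2-1)(q-1)}\,q^{-4i}=1,
\]
and well-definedness ensures this is independent of the auxiliary choice of $i$.

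The main obstacle is concentrated entirely in the well-definedness step, but almost all of its difficulty has already been discharged upstream: Theorem \ref{commonrefinement} supplies the common refinement and Proposition \ref{refinemeasure} supplies invariance of $\mu$ under refinement. What genuinely remains to be handled here is the modest but necessary bookkeeping of first passing to reduced presentations, since Theorem \ref{commonrefinement} takes reduced ddd-sets as input; once that reduction is recorded, the proof is a direct assembly of the cited results.
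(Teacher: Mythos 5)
Your proof is correct and follows essentially the same route as the paper: well-definedness via the common refinement of Theorem \ref{commonrefinement} combined with the refinement-invariance of Proposition \ref{refinemeasure}, with left-invariance and finite additivity read off from the definition and the normalisation $\mu(K)=1$ obtained from the index computation of Lemma \ref{indexink} (which is how Proposition \ref{ifmeasure}, cited by the paper for this step, is proved). The one place you are more careful than the paper is the preliminary passage to the reduced presentations $A_{red}$ and $A'_{red}$ before invoking Theorem \ref{commonrefinement}, whose hypotheses require reduced ddd-sets; the paper's proof silently omits this step, so your extra bookkeeping is a small but genuine improvement rather than a deviation.
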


\begin{proof}
Translation invariance on the left and finite additivity follow immediately from the definition. Proposition \ref{refinemeasure} gives that $\mu(A) = \mu ( \tilde A)$ for any refinement $\tilde A$ of $A$, and Theorem \ref{commonrefinement} says that for any pair $A, A' \in \mc{R}$ with $A=A'$ as sets there is some $\tilde A$ which is a refinement of both, hence we have $\mu(A) = \mu(\tilde A) = \mu(A')$ - in other words the extension of $\mu$ to $\mc{R}$ is well-defined. Finally, Proposition \ref{ifmeasure} gives the required volume of $K$.
\end{proof}

\begin{remark}
Let $\bar \mu$ be the unique Haar measure on $\GL_2(\overline{F})$ satisfying $\bar \mu (\oo_{\overline{F}})=1$, and let $p:\GL_2(\oo_F) \rightarrow \GL_2(\overline{F})$ be the projection induced by the residue map. Let $K_n = I_2 + \bar t_1^n M_2(\oo_{\overline{F}})$ as in Section \ref{sec:review}. Then $\mu(I_2 + t_2^j p^{-1} (K_i - I_2)\!) = X^{4j} \bar \mu (K_i)$. In this way we may view $\mu$ as a lift of the measure $\bar \mu$ on $\GL_2(\overline{F})$.
\end{remark}

In \cite{fesenko-loop}, Fesenko points out that the ring of subsets of $F$ generated by sets of the form $\alpha + t_1^i t_2^j O_F$ coincides with the ring generated by $\alpha + t_2^n p^{-1}(S)$ with $S$ measurable subsets of $\overline{F}$, and so the measure defined on $F$ is exactly a lift of the measure on $\overline{F}$. The above remark is the $\GL_2$ version of this statement, i.e. that $\mu$ can be viewed as a lift of the invariant measure on $\GL_2(\overline{F})$.

\begin{remark}
In \cite{kim-lee-spherical} and \cite{lee-iwahori}, Kim and Lee use combinatorial methods to study Hecke Algebras over groups closely related to $\GL_2(F)$ in the absence of an invariant measure. In an unreleased manuscript \cite{kim-lee-measure}, they construct a full $\sigma$-algebra of subsets of $\GL_2(F)$ on which they may define a measure, rather than just a ring of subsets. The measure $\mu_{KL}$ they construct may also be seen to take values in $\mbb{R} (\!(X)\!)$, but in fact their measure is confined to the smaller space of monomials.

The measure $\mu_{KL}$ may be recovered by "taking the dominant term" of the measure $\mu$ we have defined in this chapter, and so we may in fact interpret the two as being "infinitesimally close". The "truncated" convolution product defined in \cite{kim-lee-spherical} and \cite{lee-iwahori} can be expressed in terms of an integral against $\mu_{KH}$, and so it would be interesting to consider whether one can use the integral against $\mu$ defined in the following section to study "non-truncated" convolution product. It has been suggested by van Urk that in order to attack this problem, one must first extend the ring of measurable subsets $\mc{R}$ to include certain ``large" sets which have infinite measure $X^{-j}$ (in accordance with the interpretation of $X$ as an infinitesimal element). In his upcoming paper \cite{wester-fourier} he discusses an improvement of Fesenko's original approach which takes into account higher powers of the second local parameter, and similar constructions may be applicable here.
\end{remark}

\section{Integration on $\GL_2(F)$}  \label{sec:integration}

In classical analysis, the existence of an invariant measure on a locally compact space $X$ is synonymous with the existence of an invariant integral, i.e. a linear functional on the space of continuous, compactly supported, complex-valued functions on $X$.

In this section, we will consider a nice class of functions on $\GL_2(F)$ for which we can construct an integral against the measure defined in the previous section. These functions will be the analogue of the functions at the residue level which are locally constant and compactly supported. They are also analogous to the integrable functions against Fesenko's measure on $F$ as defined in \cite{fesenko-aoas1}.

\begin{definition}
Let $f : \GL_2(F) \rightarrow \mbb{C} (\!(X)\!)$ be a function of the form $$f = \sum_{i=1}^n c_i \mbb{I}_{U_i}$$ where each $c_i \in \mbb{C}(\!(X)\!)$ and $\mbb{I}_{U_i}$ is the indicator function of a dd-set $U_i$. Suppose that the $U_i$ are pairwise disjoint, and let $\mu$ be the measure on $\GL_2(F)$ satisfying $\mu(K)=1$ as in the previous section. We define the integral of $f$ against $\mu$ to be $$\int_{\GL_2(F)} f(g) d \mu (g) = \sum_{i=1}^n c_i \mu(U_i).$$ We also define the integral of a function which is zero outside finitely many points to be $0$.
\end{definition}

\begin{remark}
If $f$ is an integrable function on $\GL_2(F)$ and $E$ is a measurable subset of $\GL_2(F)$, the function $f(g) \mbb{I}_E (g)$ is also integrable, and we may define $$\int_E f(g) d \mu(g) = \int_{\GL_2(F)} f(g) \mbb{I}_E(g) d \mu (g).$$
\end{remark}

\begin{proposition}
Let $R_G$ be the vector space generated by the simple functions $f= \sum_{i=1}^n c_i \mbb{I}_{U_i}$ and the functions which are zero outside of a finite set. Then the map $f(g) \mapsto \int_{\GL_2(F)} f(g) d \mu(g)$ is a well-defined, left-invariant linear functional $R_G \rightarrow \mbb{C}(\!(X)\!)$.
\end{proposition}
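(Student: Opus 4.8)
The plan is to single out well-definedness as the only substantial assertion: once the value $\int_{\GL_2(F)} f \, d\mu(g)$ is shown to be independent of the chosen presentation of $f$, both linearity and left-invariance fall out of the finite additivity and left-invariance of $\mu$ recorded in Theorem \ref{GL2measure}. Here I regard a general element of $R_G$ as a function $\phi + h$, where $\phi = \sum_i c_i \mathbb{I}_{U_i}$ is a simple function on pairwise disjoint dd-sets and $h$ vanishes outside a finite set; since a ddd-set is a disjoint union of dd-sets and $\mc{R}$ is a ring, one may in fact take the $U_i$ to range over an arbitrary disjoint family in $\mc{R}$, with $\int(\phi + h) := \sum_i c_i \mu(U_i)$.

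First I would record the one auxiliary fact on which the argument rests: every nonempty dd-set is infinite. Each $K_{i,j}$ is infinite, since $K_{i,j}/K_{i,j+1} \cong (O_F/t_2 O_F)^4$ is infinite by Lemma \ref{indexlemma}, so every distinguished set is infinite; and a nonempty dd-set $g K_{m,n} \setminus \bigcup_j B_j$ has each surviving $B_j$ a coset of a proper subgroup $K_{p_j,q_j} \subsetneq K_{m,n}$, so any surviving point $x$ retains an entire infinite coset $x K_{m,M}$ (for $M$ larger than all the $q_j$, so that $K_{m,M} \subseteq K_{p_j,q_j}$ forces $x K_{m,M} \cap B_j = \emptyset$). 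Consequently a simple function that vanishes off a finite set must have every piece carrying a nonzero coefficient empty, and hence integral $0$; this is exactly why the finite-support summand $h$ never affects the value $\sum_i c_i \mu(U_i)$.

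The core step is then representation-independence of the simple part. Given two disjoint presentations $\phi = \sum_i c_i \mathbb{I}_{U_i}$ and $\psi = \sum_j d_j \mathbb{I}_{V_j}$ of functions agreeing off a finite set, I would pass to the common refinement formed by the sets $U_i \cap V_j$ and the remainders $U_i \setminus \bigcup_j V_j$ and $V_j \setminus \bigcup_i U_i$, all of which lie in $\mc{R}$ because it is closed under intersection and difference. On each nonempty $U_i \cap V_j$ the two functions are constantly $c_i$ and $d_j$ off a finite set, and since this set is infinite the values must coincide, forcing $c_i = d_j$; similarly a nonempty remainder $U_i \setminus \bigcup_j V_j$ carries $c_i$ against $0$ and forces $c_i = 0$. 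Finite additivity of $\mu$ gives $\mu(U_i) = \sum_j \mu(U_i \cap V_j) + \mu\big(U_i \setminus \bigcup_j V_j\big)$, and assembling these identities yields $\sum_i c_i \mu(U_i) = \sum_{i,j} c_i \mu(U_i \cap V_j) = \sum_{i,j} d_j \mu(U_i \cap V_j) = \sum_j d_j \mu(V_j)$, the remainder terms contributing nothing; this is the desired equality $\int \phi = \int \psi$.

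Finally, linearity and left-invariance are routine. For linearity I would refine the supports of $f$ and $g$ simultaneously as above, expressing both over one common disjoint family $\{W_k\} \subset \mc{R}$, so that $a f + b g = \sum_k (a c_k + b e_k)\mathbb{I}_{W_k}$ up to a finite-support term, and then distribute. For left-invariance I would use that $L_h \mathbb{I}_U = \mathbb{I}_{hU}$ with $hU$ again a dd-set, since a translate $g K_{i,j} \mapsto h g K_{i,j}$ of a distinguished set is distinguished, whence $\int L_h f = \sum_i c_i \mu(h U_i) = \sum_i c_i \mu(U_i) = \int f$ by the left-invariance of $\mu$. I expect the main obstacle to be the well-definedness bookkeeping of the third paragraph, and within it the step where coefficient equality is forced on overlaps, which genuinely requires the infinitude of nonempty dd-sets; everything else reduces to a direct appeal to Theorem \ref{GL2measure}.
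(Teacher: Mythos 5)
Your argument is correct, but it takes a genuinely different route from the paper's. The paper proceeds combinatorially: it first modifies both presentations (by adding indicator functions of the excised pieces $W_{i,r}$ to each side) so that every $U_i$ becomes a distinguished set, then uses Lemma \ref{intproperty} to show that either $U_1$ is a union of some of the $V_j$ or a union of some of the $U_i$ equals a single $V_j$, and finishes by induction on the lengths of the two sums, the base case being a single distinguished set written as a disjoint union of dd-sets. You instead run the standard measure-theoretic refinement argument: intersect the two families, use that $\mc{R}$ is a ring so that the pieces $U_i \cap V_j$ and the remainders $U_i \backslash \bigcup_j V_j$ are measurable, force $c_i = d_j$ on nonempty overlaps, and conclude from the finite additivity of $\mu$ on $\mc{R}$ established in Theorem \ref{GL2measure}. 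Your route buys two things: it avoids any reliance on the special nested structure of distinguished sets (so it would survive in more general settings), and your preliminary observation that every nonempty dd-set is infinite is precisely what is needed to reconcile the two clauses of the definition of the integral (the simple-function clause and the finite-support clause) --- a consistency point the paper's proof does not address explicitly. The paper's induction, in exchange, exposes more of the combinatorics of how one presentation of a ddd-set converts into another, in the spirit of the refinement machinery of Section \ref{sec:structure}. Both proofs ultimately rest on the same prior input, namely that $\mu$ is a well-defined finitely additive measure on $\mc{R}$.
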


\begin{proof}
Suppose that $$f = \sum_i c_i \mbb{I}_{U_i} = \sum_j d_j \mbb{I}_{V_j}$$ are two different ways of expressing $f$ as a simple function. We need to show that $\sum_i c_i \mu(U_i) = \sum_j d_j \mu(V_j).$

By finite additivity of the measure and the property that $\mbb{I}_{A \cup B} = \mbb{I}_A + \mbb{I}_B$ for disjoint sets $A$ and $B$, we may assume that each $U_i$ and each $V_j$ is a dd-set of the form $A \backslash \bigcup_r B_r$ with $A$ and all $B_r$ distinguished sets.

Furthermore, we may arrange that each $U_i$ is in fact a distinguished set as follows. Suppose $U_i = A_i \backslash \bigcup_r B_{i,r}$. For a given $r$, let $W_{i,r} = (B_{i,r} \backslash \bigcup_{k} U_k)$ be the part of $B_{i,r}$ which is not contained in any $U_k$. Since $W_{i,r}$ is disjoint from all $U_k$ and $V_j$, adding $c_i \mbb{I}_{W_{i,r}}$ to both expressions for $f$ changes both integrals by the same value. Doing this for all $r$ and for all $i$ then leaves the left hand expression for $f$ (after simplification) in the form $\sum_i c_i \mbb{I}_{U'_i}$ with $U'_i$ a distinguished set.

For the next reduction, Lemma \ref{intproperty} implies that either $U_1$ is the union $\bigcup_{j \in J} V_{j}$ of some collection of the $V_j$, or the union $\bigcup_{i \in I} U_{i}$ of $U_1$ with some other of the $U_i$ is equal to one of the $V_j$ (which we may say is $V_1$ after relabelling). In the first case we have $f = c_1 \mbb{I}_{U_1} + \sum_{i >1} c_i \mbb{I}_{U_i} = \sum_{j \in J} d_j \mbb{I}_{V_j} + \sum_{j \notin J} d_j \mbb{I}_{V_j}$, and in the second case we have instead $f = \sum_{i \in I} c_i \mbb{I}_{U_i} + \sum_{i \notin I} c_i \mbb{I}_{U_i} = d_1 \mbb{I}_{V_1} + \sum_{j>1} d_j \mbb{I}_{V_j}$. 

By induction on the lengths of the sums (since the result is clear when there is only one set on each side), it then suffices to prove the result for expressions of the form $f^* = c_1 \mbb{I}_{U_1} = \sum_j d_j \mbb{I}_{V_j}$ and $f^\dagger= \sum_i c_i \mbb{I}_{U_i} = d_1 \mbb{I}_{V_1}$. Moreover, since $V_1$ is of the form $A \backslash \bigcup_\ell B_\ell$, and each $B_\ell$ is necessarily disjoint from all of the $U_1$, adding $\sum_\ell d_1 \mbb{I}_{B_\ell}$ to both sides of $f^\dagger$ leaves us in the $f^*$ case.

After dividing by the constant, it remains to prove that if $\mbb{I}_U = \sum d_j \mbb{I}_{V_j}$ with $U$ distinguished and $V_j$ disjoint dd-sets with $U = \bigcup_j V_j$ then we have $\mu(U) = \sum_j d_j \mu(V_j)$. To do so, we first note that since the $V_j$ are disjoint and sum to $U$ we must have $d_j =1$ for all $j$, and the result then follows from finite additivity of the measure.

It remains to show that the integral is a left-invariant linear map. But if $f$ and $g$ can be written as sums of indicator functions then so can $f+g$, so additivity of the integral follows from the definition. Similarly, if $f=\sum c_i \mbb{I}_{U_i}$ and $c \in \mbb{C}(\!(X)\!)$ then $cf = \sum c\cdot c_i \mbb{I}_{U_i}$ and $\int cf \ d \mu = \sum c \cdot c_i \mu(U_i) = c \sum c_i \mu(U_i)= c \int f d \mu$. Left-invariance follows immediately from the corresponding property of the measure.
\end{proof}

\begin{example}
It is expected that the convolution product of two integrable functions will be important, and so as a first example we compute the convolution product of the characteristic function of $K$ with itself.

Let $f(x)=\mbb{I}_{K}(x)$. Then $(f * f)(x) = \int_G f(y)f(y^{-1}x) d \mu (y)$. The integrand is nonzero (and hence equal to $1$) if and only if both $y \in K$ and $y^{-1} x \in K$. The second condition can be rewritten as $x \in yK$, and by the first condition we have $yK=K$, and so the integrand is nonzero if and only if both $y \in K$ and $x \in K$. Thus we have $(f * f)(x) = \int_G f(x)f(y) d \mu (y) = f(x) \mu(K) = f(x)$.
\end{example}

We will now work towards showing that the integral can be factored into a multiple integral over $F$. Since we already know significantly more about integration over $F$ due to \cite{fesenko-aoas1}, \cite{fesenko-loop}, this allows us to deduce some important properties of the integral over $\GL_2(F)$. 

We begin by showing that, for characteristic functions of the $K_{i,j}$, we may take integrals over $F$ in matrix coordinates.

\begin{lemma} \label{coordinate}
Let $f = \mbb{I}_{K_{i,j}}$, and let $g = \bigg( \scalebox{0.8}{$\begin{array}{cc}
\alpha & \beta    \\
\gamma & \delta
 \end{array} $} \bigg) \in \GL_2(F)$. Then
$f \bigg(
\bigg( \scalebox{0.8}{$\begin{array}{cc}
\alpha & \beta    \\
\gamma & \delta
 \end{array} $} \bigg) \bigg)$ is integrable on $F$ with respect to each of $\alpha, \beta, \gamma, \delta$, and the resulting functions remain integrable with respect to each of the remaining variables.
\end{lemma}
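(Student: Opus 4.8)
The plan is to exploit the fact that membership in $K_{i,j}$ imposes independent conditions on the four matrix entries, so that $f$ separates completely as a product of single-variable characteristic functions, at which point integrability in each variable and the persistence of integrability become immediate.

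First I would record the factorisation. A matrix $g$ lies in $K_{i,j}$ precisely when $g = I_2 + t_1^i t_2^j M$ for some $M \in M_2(O_F)$; since the four entries of $M$ range freely and independently over $O_F$, this is equivalent to the four separate conditions $\alpha, \delta \in 1 + t_1^i t_2^j O_F$ and $\beta, \gamma \in t_1^i t_2^j O_F$. In particular the invertibility of $g$ imposes no additional constraint, since $K_{i,j} \subset \GL_2(O_F)$ automatically. Consequently
\[
f\bigg( \bigg( \scalebox{0.8}{$\begin{array}{cc} \alpha & \beta \\ \gamma & \delta \end{array}$} \bigg) \bigg)
= \mbb{I}_{1+t_1^i t_2^j O_F}(\alpha) \, \mbb{I}_{t_1^i t_2^j O_F}(\beta) \, \mbb{I}_{t_1^i t_2^j O_F}(\gamma) \, \mbb{I}_{1+t_1^i t_2^j O_F}(\delta).
\]
Each of the four factors is the characteristic function of a translate of the fractional ideal $t_1^i t_2^j O_F$, i.e.\ exactly one of the basic measurable subsets of $F$ underlying Fesenko's theory as recalled in Section~\ref{sec:review}. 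Its integral over $F$ therefore exists and equals the Fesenko measure $q^{-i}X^j$ of the set.

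The remaining step is to observe that, because the above expression is a product of functions each depending on a single one of $\alpha, \beta, \gamma, \delta$, integrating with respect to any one of these variables simply replaces the corresponding factor by its (constant) integral $q^{-i}X^j$, leaving a constant multiple of a product of characteristic functions in the three remaining variables. This is again a function of exactly the same product form, so the identical reasoning shows it is integrable with respect to each of the remaining variables, and the argument iterates, in any chosen order of the four variables.

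I expect there to be essentially no genuine obstacle here beyond careful bookkeeping: the entire content of the lemma is the separation of variables. The one point that does require care is verifying the factorisation itself, and in particular checking that the invertibility condition on $g$ introduces no coupling between the entries and that each of the resulting single-variable sets is genuinely one of Fesenko's generating measurable subsets of $F$, so that the one-dimensional integrability theory of \cite{fesenko-aoas1}, \cite{fesenko-loop} applies directly.
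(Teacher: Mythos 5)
Your proposal is correct and follows essentially the same route as the paper: factor $\mbb{I}_{K_{i,j}}$ as a product of single-variable characteristic functions of translates of $t_1^i t_2^j O_F$, note each factor is one of Fesenko's basic measurable sets, and iterate the integration variable by variable. Your extra remark that invertibility imposes no additional coupling (since $K_{i,j} \subset \GL_2(O_F)$ for $(i,j)>(0,0)$) is a correct observation that the paper leaves implicit.
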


\begin{proof}
By the definitions of $f$ and $K_{i,j}$, $f(g)=1$ if and only if the following conditions are satisfied simultaneously: (1) $\alpha \in 1+t_1^i t_2^j O_F$, (2) $\beta \in t_1^i t_2^j O_F$, (3) $\gamma \in t_1^i t_2^j O_F$, (4) $\delta \in 1+ t_1^i t_2^j O_F$; otherwise $f(g)=0$. We can thus express $f(g)$ as a product of functions
\begin{equation*}
f(g)=\mbb{I}_{1+t_1^i t_2^j O_F}(\alpha) \mbb{I}_{t_1^i t_2^j O_F}(\beta) \mbb{I}_{t_1^i t_2^j O_F} (\gamma) \mbb{I}_{1+t_1^i t_2^j O_F}(\delta)
\end{equation*}
with $\alpha, \beta, \gamma, \delta$ appearing independently of each other. If we treat any three of the variables as a constant, we obtain a multiple of an indicator function of a measurable subset of $F$ (which is an integrable function on $F$) in the remaining variable. Integrating over this variable leaves a product of integrable functions of a similar form, but of shorter length, and so by induction one sees that we may successively integrate over each variable. 
\end{proof}

If we want to integrate successively over the entries, it is possible that the result may depend on the order that we integrate out the variables. Indeed, in \cite{morrow-fubini} Morrow shows that Fubini's Theorem may fail in the higher dimensional case under a nonlinear change of variables. Since we will later want to make certain changes of variables which may permute the order of the differentials, it is thus important that we check that the multiple integrals in question do not depend on the order of integration.

\begin{lemma} \label{easyfubini}
Let $f = \mbb{I}_{K_{i,j}}$, and let $g = \bigg( \scalebox{0.8}{$\begin{array}{cc}
\alpha & \beta    \\
\gamma & \delta
 \end{array} $} \bigg) \in \GL_2(F)$. The multiple integral 
\begin{equation*}
 \int_{F} \int_{F} \int_F \int_{F}  f \bigg(
\bigg( \scalebox{0.8}{$\begin{array}{cc}
\alpha & \beta    \\
\gamma & \delta
 \end{array} $} \bigg) \bigg) \ d \alpha \ d \beta \ d \gamma \ d \delta,
\end{equation*}
is well-defined, i.e. it is independent of the order of integration. Here $d \alpha = d \mu_F(\alpha)$, $d \beta = d \mu_F(\beta)$, and so on, where $\mu_F$ denotes the measure on $F$ defined by Fesenko (reviewed here in Section \ref{sec:review}).
\end{lemma}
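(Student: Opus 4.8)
The plan is to exploit the factorization established in Lemma~\ref{coordinate}, namely that
\begin{displaymath}
f(g)=\mbb{I}_{1+t_1^i t_2^j O_F}(\alpha) \, \mbb{I}_{t_1^i t_2^j O_F}(\beta) \, \mbb{I}_{t_1^i t_2^j O_F}(\gamma) \, \mbb{I}_{1+t_1^i t_2^j O_F}(\delta),
\end{displaymath}
so that the integrand is a product of four indicator functions, each depending on a single variable. The key point is that for such a completely separated product, the multiple integral factors as a product of one-dimensional integrals over $F$, and a product does not depend on the order in which its factors are multiplied. Thus I would first argue that integrating out any single variable, say $\alpha$, affects only the factor $\mbb{I}_{1+t_1^i t_2^j O_F}(\alpha)$ and replaces it with the scalar $\mu_F(1+t_1^i t_2^j O_F) = X^{4j}q^{-4i} \cdot$ (an appropriate constant), leaving the remaining three factors untouched. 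Since each of $\alpha,\beta,\gamma,\delta$ appears in exactly one factor, integrating in any order simply multiplies the four scalar values $\mu_F$ of the respective translated fractional ideals, and these four scalars can be multiplied in any order.

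More concretely, I would write the multiple integral as
\begin{displaymath}
\left( \int_F \mbb{I}_{1+t_1^i t_2^j O_F}(\alpha)\, d\alpha \right) \left( \int_F \mbb{I}_{t_1^i t_2^j O_F}(\beta)\, d\beta \right) \left( \int_F \mbb{I}_{t_1^i t_2^j O_F}(\gamma)\, d\gamma \right) \left( \int_F \mbb{I}_{1+t_1^i t_2^j O_F}(\delta)\, d\delta \right),
\end{displaymath}
where each single integral is simply the Fesenko measure of a translate of a fractional ideal and hence a well-defined element of $\mbb{R}(\!(X)\!)$. The justification that the iterated integral actually equals this product, regardless of order, is the content of the classical (one-dimensional) Fubini statement applied repeatedly: at each stage we integrate a sum (indeed, here a single scalar multiple) of indicator functions against $\mu_F$, and finite additivity together with $\mbb{C}(\!(X)\!)$-linearity of the $F$-integral lets us pull the scalar factors outside. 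Since the scalars in $\mbb{R}(\!(X)\!)$ commute, the final product is manifestly symmetric in the four variables.

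The main subtlety to address is the following: the paper has just emphasized (citing \cite{morrow-fubini}) that Fubini's theorem can genuinely fail in the two-dimensional setting. So I must be careful to explain \emph{why} no such pathology arises here. The essential reason is that the integrand is a \emph{product} of functions in \emph{disjoint} variables with \emph{no cross terms}; the failure of Fubini in Morrow's examples arises precisely from genuinely two-variable integrands (or from nonlinear changes of variable that entangle the variables), whereas here the variables are completely decoupled before any integration takes place. I would therefore phrase the argument so that the order-independence is reduced to the trivial commutativity of multiplication in $\mbb{R}(\!(X)\!)$, thereby sidestepping the general Fubini question entirely. I expect this conceptual point — making precise that a fully separated product integrand is immune to the Fubini failure — to be the only real obstacle; the computational content is entirely routine once the factorization from Lemma~\ref{coordinate} is in hand.
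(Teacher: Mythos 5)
Your proposal is correct and follows essentially the same route as the paper: the paper's proof likewise exploits the complete separation of variables from Lemma \ref{coordinate}, reducing to the two-variable case $\int_F\int_F \lambda\,\mbb{I}_A(x)\mbb{I}_B(y)\,dx\,dy$ (handled by pulling out scalars and using commutativity, yielding $\lambda\mu_F(A)\mu_F(B)$ in either order) and then concluding by induction. Your additional remarks on why Morrow's Fubini counterexamples do not apply here match the paper's surrounding discussion of separable integrands.
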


\begin{proof}
It is enough to show that $$\int_F \int_F \lambda \mbb{I}_A(x) \mbb{I}_B(y) dx dy$$ is well-defined for $A, B$ measurable subsets of $F$ and $\lambda \in \mbb{C}$, since the desired result follows from this by induction. We have
\begin{align*}
\int_F \left( \int_F \lambda \mbb{I}_A(x) \mbb{I}_B(y) dx \right) dy&= \int_F \lambda \mbb{I}_B(y) \left( \int_F \mbb{I}_A(x) dx \right) dy \\
 &= \int_F \lambda \mu_F(A) \mbb{I}_B(y) dy \\
 & = \lambda \mu_F(A) \mu_F(B) \\
 & = \int_F \lambda \mu_F(B) \mbb{I}_A(x) dx \\
 &= \int_F \lambda \mbb{I}_A(x) \left( \int_F \mbb{I}_B(y) dy \right) dx \\
 &= \int_F \left( \int_F \lambda \mbb{I}_A(x) \mbb{I}_B(y) dy \right) dx.
\end{align*}
\end{proof}

\begin{remark}
More generally, for a function $h(x,y)$ on $F \times F$ which is integrable with respect to each of $x$ and $y$ and can be written as a product $h(x,y) = h_1(x) h_2(y)$, the integral $\int_F \int_F h(x,y) dx dy$ is well-defined. As noted above, in classical analysis one has Fubini's Theorem, which ensures that \textit{any} integrable function $h(x,y)$ satisfies the same property, but this is not necessarily true in the higher dimensional case. For the remainder of this text, however, we will only be working with functions where one can "separate the variables", and so the above Lemma will be good enough here.
\end{remark}

We now establish the main result of this section.

\begin{theorem} \label{intfactor}
Let $f \in R_G$, and let $\bigg( \scalebox{0.8}{$\begin{array}{cc}
\alpha & \beta    \\
\gamma & \delta
 \end{array} $} \bigg) \in \GL_2(F)$. Then
\begin{displaymath}
\begin{split} \begin{flalign*} \int_{\GL_2(F)} f(g) \ d \mu(g) = && \end{flalign*} \\
\frac{q^3}{(q^2-1)(q-1)} \int_{F} \int_{F} \int_F \int_{F} \frac{1}{|\alpha \delta - \beta \gamma|_F^2} f \bigg(
\bigg( \scalebox{0.8}{$\begin{array}{cc}
\alpha & \beta    \\
\gamma & \delta
 \end{array} $} \bigg) \bigg) \ d \alpha \ d \beta \ d \gamma \ d \delta.
\end{split}
\end{displaymath}
\end{theorem}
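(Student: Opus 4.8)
The plan is to use linearity to reduce to a single distinguished set, verify the identity there by a direct computation, and then extend to all translates by a change of variables. Both sides are $\mbb{C}(\!(X)\!)$-linear in $f$, so it suffices to treat $f=\mbb{I}_U$ for a dd-set $U$, and, expanding $\mbb{I}_{A\backslash\bigcup_r B_r}=\mbb{I}_A-\sum_r\mbb{I}_{B_r}$, to treat $f=\mbb{I}_D$ for a single distinguished set $D$. The functions in $R_G$ vanishing outside a finite set contribute $0$ to both sides: the left-hand integral is $0$ by definition, while on the right the integrand vanishes off finitely many points of $F^4$, so each inner $F$-integral vanishes by the corresponding convention for Fesenko's integral.

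For the untranslated case $D=K_{i,j}$, the left-hand side is $\mu(K_{i,j})=\frac{q^3}{(q^2-1)(q-1)}q^{-4i}X^{4j}$. For the right-hand side I would invoke Lemma \ref{coordinate} to factor $\mbb{I}_{K_{i,j}}$ as $\mbb{I}_{1+t_1^it_2^jO_F}(\alpha)\,\mbb{I}_{t_1^it_2^jO_F}(\beta)\,\mbb{I}_{t_1^it_2^jO_F}(\gamma)\,\mbb{I}_{1+t_1^it_2^jO_F}(\delta)$. The key observation is that on the support of this product $\alpha,\delta\in 1+t_1^it_2^jO_F$ and $\beta,\gamma\in t_1^it_2^jO_F$; since $(i,j)>(0,0)$ places $t_1^it_2^j$ in the maximal ideal $t_1O_F$, a short expansion gives $\alpha\delta-\beta\gamma\in 1+t_1O_F\subset O_F^\times$, so $|\alpha\delta-\beta\gamma|_F=1$ and the Jacobian factor is identically $1$ there. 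By Lemma \ref{easyfubini} the integral then separates into a product of four one-dimensional integrals, and Fesenko's normalisation $\mu_F(c+t_1^it_2^jO_F)=q^{-i}X^j$ makes these contribute $(q^{-i}X^j)^4=q^{-4i}X^{4j}$, matching the left-hand side.

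For the translated case $D=gK_{i,j}$ the left-hand side is unchanged by left-invariance of $\mu$ (Theorem \ref{GL2measure}), so it remains to show that the right-hand functional $I(f):=\frac{q^3}{(q^2-1)(q-1)}\int_{F^4}|\det h|_F^{-2}f(h)\,dh$, with $h=\big(\begin{smallmatrix}\alpha&\beta\\\gamma&\delta\end{smallmatrix}\big)$, is itself left-invariant, i.e. $I(\mbb{I}_{gK_{i,j}})=I(\mbb{I}_{K_{i,j}})$. I would establish this via the substitution $h=gh'$. Reading off the entries, $h\mapsto gh$ is the $F$-linear automorphism of $M_2(F)\cong F^4$ represented by $\big(\begin{smallmatrix}a&0&b&0\\0&a&0&b\\c&0&d&0\\0&c&0&d\end{smallmatrix}\big)$, of determinant $(\det g)^2$, so it scales the product measure by $|\det g|_F^2$; since $\det h=\det g\cdot\det h'$ gives $|\det h|_F^{-2}=|\det g|_F^{-2}|\det h'|_F^{-2}$, the Jacobian cancels the extra factor exactly and leaves $I(\mbb{I}_{gK_{i,j}})=I(\mbb{I}_{K_{i,j}})$. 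Since $\mu$ and $I$ are then both left-invariant and agree on every $K_{i,j}$, they agree on all distinguished sets, and the theorem follows by linearity.

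The delicate point, and the main obstacle, is justifying this change of variables within Fesenko's and Morrow's framework, where Fubini can fail for nonlinear substitutions (cf. \cite{morrow-fubini}). To make $h=gh'$ rigorous I would decompose $g$ into elementary matrices and treat each elementary left multiplication --- row scaling, transvection, and swap --- separately, using the analysis of linear changes of variables in \cite{morrow-gln}. For each such elementary step the integrand stays separable in the affected coordinates, so the order of integration is immaterial by Lemma \ref{easyfubini}, the one-dimensional Jacobian is a single power of $|\cdot|_F$, and its cancellation against $|\det h|_F^{-2}$ is transparent; composing these invariance-preserving steps then yields the claim for arbitrary $g$, as $\GL_2(F)$ is generated by elementary matrices.
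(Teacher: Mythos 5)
Your proposal is correct and follows essentially the same route as the paper's proof: a direct computation for $f=\mbb{I}_{K_{i,j}}$ using the factorisation of the indicator function and $|\alpha\delta-\beta\gamma|_F=1$ on the support, followed by a reduction to left-invariance of the right-hand functional, verified by decomposing $g$ into elementary matrices and cancelling the one-dimensional Jacobians against $|\det|_F^{-2}$, with Lemma \ref{easyfubini} handling the permutation of differentials. Your preliminary remarks on reducing to distinguished sets and on functions supported on finite sets are slightly more explicit than the paper's, but the substance is identical.
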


\begin{proof}
First we check that the integrals agree for $f=\mbb{I}_{K_{i,j}}$. In this case, the left hand integral is equal to the volume of $K_{i,j}$. By our choice of normalisation $\mu(K)=1$, we have  $\mu(K_{i,j}) = \dfrac{q^3}{(q^2-1)(q-1)} q^{-4i}X^{4j}$, and so we must check that this agrees with the right hand expression.

By the definition of $K_{i,j}$, we have $$f(g) = \mbb{I}_{1+t_1^it_2^jO_F} (\alpha) \cdot \mbb{I}_{t_1^it_2^jO_F} (\beta) \cdot  \mbb{I}_{t_1^it_2^jO_F} (\gamma) \cdot \mbb{I}_{1+t_1^it_2^jO_F} (\delta).$$ Furthermore, for $g \in K_{i,j}$, $\det g = \alpha \delta - \beta \gamma \in 1 + t_1^i t_2^j O_F$, hence $| \det g|_F^2 = 1$. We can now directly compute the integrals on the right hand side to obtain $\mu_F(1+t_1^it_2^j O_F)^2\mu_F(t_1^it_2^j O_F)^2 = q^{-4i}X^{4j}$. Multiplying by the factor $\dfrac{q^3}{(q^2-1)(q-1)}$ then yields the same value as the left hand side.

Since both integrals are linear, to complete the proof of the Theorem we must show that the integrals agree for $f=\mbb{I}_{hK_{i,j}}$ with $h \in \GL_2(F)$. Since the integral on the left is left-invariant, this is equivalent to proving that the integral on the right is left-invariant.

To show this, it is enough to consider translation by elementary matrices, since these generate $\GL_2(F)$. First suppose $h= \bigg( \scalebox{0.8}{$\begin{array}{cc}
x & 0    \\
0 & 1
 \end{array} $} \bigg)$ with $x \in F^\times$. Then $hg= \bigg( \scalebox{0.8}{$\begin{array}{cc}
x \alpha & x \beta    \\
\gamma & \delta
 \end{array} $} \bigg)$, $|\det (hg)|_F^2 =  |x|_F^2 |\det g|_F^2$, and under the change of variables $g \mapsto hg$ the differentials become $d \alpha \mapsto d (x \alpha) = |x|_F d \alpha$, $d \beta \mapsto d(x \beta) = |x|_F d \beta$, $d \gamma \mapsto d \gamma$, $d \delta \mapsto d \delta$. The factors of $|x|_F$ then cancel with those appearing in $|\det (hg)|_F^2$, leaving the integral invariant under this change of variables. 

The case $h= \bigg( \scalebox{0.8}{$\begin{array}{cc}
1 & 0    \\
0 & x
 \end{array} $} \bigg)$ works almost identically, except the factors of $|x|_F$ come instead from $d \gamma$ and $d \delta$. For $h= \bigg( \scalebox{0.8}{$\begin{array}{cc}
1 & u    \\
0 & 1
 \end{array} $} \bigg)$ with $u \in F$, $\det h = 1$, and the change of variables $g \mapsto hg$ results in an additive shift inside the differentials. Since the measure on $F$ is invariant under such transformations, the integral remains invariant. Exactly the same is true when we take $h$ to be a lower-triangular unipotent matrix. 

The final case we must check is $h= \bigg( \scalebox{0.8}{$\begin{array}{cc}
0 & 1    \\
1 & 0
 \end{array} $} \bigg)$, which has determinant $-1$ (and $|-1|_F=1$) and simply permutes the differentials when we make the change $g \mapsto hg$, which leaves the integral invariant by Lemma \ref{easyfubini}.
\end{proof}

While this Theorem is important on its own, since it gives a way of computing certain integrals much more easily by reducing to integrals over $F$, it also has several important Corollaries.

\begin{corollary}
For $(i,j)>(0,0)$, the unique $\mbb{R}(\!(X)\!)$-valued measure on $\GL_2(F)$ which is compatible with the measure on $F$ (in the sense of the above Theorem) satisfies
\begin{displaymath}
\mu(K_{i,j}) = \frac{q^3}{(q^2-1)(q-1)} q^{-4i}X^{4j}.
\end{displaymath}
\end{corollary}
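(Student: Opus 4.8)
The plan is to apply Theorem \ref{intfactor} directly to the indicator function $f = \mbb{I}_{K_{i,j}}$, which is an element of $R_G$. By the very definition of the integral, the left-hand side of the factorization formula is $\int_{\GL_2(F)} \mbb{I}_{K_{i,j}}(g)\, d\mu(g) = \mu(K_{i,j})$, so it suffices to evaluate the right-hand quadruple integral. For this I would reuse the computation already carried out inside the proof of Theorem \ref{intfactor}: writing $f(g)$ as the product $\mbb{I}_{1+t_1^it_2^jO_F}(\alpha)\,\mbb{I}_{t_1^it_2^jO_F}(\beta)\,\mbb{I}_{t_1^it_2^jO_F}(\gamma)\,\mbb{I}_{1+t_1^it_2^jO_F}(\delta)$, and observing that for $g \in K_{i,j}$ one has $\det g = \alpha\delta - \beta\gamma \in 1 + t_1^i t_2^j O_F \subset O_F^\times$, so that $|\det g|_F^2 = 1$ on the support of $f$. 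The integrand therefore reduces to the product of the four indicator functions, and by translation-invariance of Fesenko's measure the integral factors as $\mu_F(1+t_1^it_2^jO_F)^2\,\mu_F(t_1^it_2^jO_F)^2 = (q^{-i}X^j)^4 = q^{-4i}X^{4j}$. Multiplying by the constant $\frac{q^3}{(q^2-1)(q-1)}$ yields the claimed value of $\mu(K_{i,j})$.

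For the uniqueness assertion, I would argue as follows. Suppose $\mu_1$ and $\mu_2$ are two $\mbb{R}(\!(X)\!)$-valued measures on $\mc{R}$ both compatible with $\mu_F$ in the sense of Theorem \ref{intfactor}. Then for every $f \in R_G$ the two integrals $\int f\, d\mu_1$ and $\int f\, d\mu_2$ agree, because each equals the same quadruple integral over $F$, an expression that depends only on $\mu_F$ and not on the measure chosen on $\GL_2(F)$. Specialising to $f = \mbb{I}_{gK_{i,j}}$ forces $\mu_1(gK_{i,j}) = \mu_2(gK_{i,j})$ on every distinguished set. Since a finitely additive measure on $\mc{R}$ is determined by its values on distinguished sets — this being exactly the content of the additive extension whose well-definedness is guaranteed by Theorem \ref{GL2measure} — we conclude $\mu_1 = \mu_2$ on all of $\mc{R}$.

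I do not expect a genuine obstacle here, since the analytic substance has already been absorbed into Theorem \ref{intfactor}; the corollary is essentially a read-off. The only point deserving a line of care is the uniqueness step, whose force comes precisely from the observation that the right-hand side of the factorization is intrinsic to $\mu_F$, so that compatibility pins down the values on all distinguished sets and hence, via finite additivity, the whole measure.
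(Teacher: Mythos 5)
Your proposal is correct and matches the paper's intent: the corollary is stated without proof precisely because the computation of the right-hand side for $f=\mbb{I}_{K_{i,j}}$ (yielding $\mu_F(1+t_1^it_2^jO_F)^2\mu_F(t_1^it_2^jO_F)^2=q^{-4i}X^{4j}$) is already carried out verbatim inside the proof of Theorem \ref{intfactor}, and your uniqueness argument via agreement on distinguished sets plus finite additivity is the natural reading of the claim. No gaps.
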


\begin{corollary} \label{GL2unimodular}
This integral, and hence the measure defined in the previous section, is both left and right translation-invariant. 
\end{corollary}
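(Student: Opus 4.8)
The plan is to read right-invariance off the matrix-coordinate formula of Theorem~\ref{intfactor}, in exactly the way that left-invariance was verified \emph{inside} the proof of that theorem. Left-invariance of $\mu$ is already part of Theorem~\ref{GL2measure}, so only the right-hand case is new. Right-invariance of the integral is the assertion that
\[
\int_{\GL_2(F)} f(gh)\, d\mu(g) = \int_{\GL_2(F)} f(g)\, d\mu(g)
\]
for every $f \in R_G$ and every $h \in \GL_2(F)$; specializing to $f = \mbb{I}_E$ then yields $\mu(Eh^{-1}) = \mu(E)$, i.e. right-invariance of $\mu$. By Theorem~\ref{intfactor} the left-hand integral equals $\frac{q^3}{(q^2-1)(q-1)}$ times the multiple integral of $|\alpha\delta-\beta\gamma|_F^{-2}\, f(gh)$ over $F^4$, so it suffices to show that this multiple integral is unchanged under the substitution $g \mapsto gh$.

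First I would reduce to the case that $h$ is one of the elementary matrices, since these generate $\GL_2(F)$ and a composite of invariances is again an invariance. For each elementary type I would track how $g \mapsto gh$ transforms both the factor $|\det g|_F^{-2}$ and the product differential $d\alpha\, d\beta\, d\gamma\, d\delta$, and check that the two effects cancel, precisely as in Theorem~\ref{intfactor} but now with right rather than left multiplication. For $h = \diag(x,1)$ one has $gh = \big(\begin{smallmatrix} x\alpha & \beta \\ x\gamma & \delta\end{smallmatrix}\big)$, so the substitution scales $\alpha$ and $\gamma$ by $x$; the differentials $d\alpha, d\gamma$ then contribute a factor $|x|_F^2$, which cancels against the factor in $|\det(gh)|_F^2 = |x|_F^2|\det g|_F^2$. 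The case $h = \diag(1,x)$ is identical with $\beta, \delta$ in place of $\alpha, \gamma$. For the upper unipotent $h$, right multiplication replaces $\beta$ by $\beta + u\alpha$ and $\delta$ by $\delta + u\gamma$ while fixing $\alpha, \gamma$ and leaving $\det g$ unchanged, so integrating over $\beta$ and $\delta$ first and using translation-invariance of $\mu_F$ gives the claim; the lower unipotent case is symmetric in $\alpha, \gamma$. Finally, for the antidiagonal permutation matrix the substitution merely interchanges $\alpha \leftrightarrow \beta$ and $\gamma \leftrightarrow \delta$ and sends $\det g$ to $-\det g$ (with $|-1|_F = 1$), so invariance follows from the order-independence established in Lemma~\ref{easyfubini}. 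Having checked all generators, the multiple integral is right-invariant, hence so is the integral over $\GL_2(F)$ and therefore $\mu$.

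The step I expect to be the main subtlety is the unipotent case. Unlike a rescaling, the shift applied to $\beta$ here depends on $\alpha$ (and that applied to $\delta$ on $\gamma$), so $g \mapsto gh$ is not a translation in any single coordinate; invariance genuinely requires that one may integrate over the shifted variables $\beta, \delta$ before integrating over $\alpha, \gamma$. This is exactly the reason Lemma~\ref{easyfubini} and the ensuing separated-variable remark were isolated, and I would invoke them to guarantee that the iterated integral may be evaluated in the required order. Everything else is routine bookkeeping of Jacobian factors against the determinant term, just as in the left-invariant case treated in Theorem~\ref{intfactor}.
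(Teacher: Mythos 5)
Your proposal is correct and follows exactly the paper's route: the paper's proof of Corollary~\ref{GL2unimodular} simply observes that right-invariance of the matrix-coordinate integral in Theorem~\ref{intfactor} can be checked on the elementary-matrix generators in the same way left-invariance was, which is what you carry out. Your explicit bookkeeping of the column (rather than row) operations, and your remark that the unipotent shift $\beta \mapsto \beta + u\alpha$ is handled by integrating the shifted variables first, are accurate elaborations of what the paper leaves implicit.
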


\begin{proof}
By Theorem \ref{intfactor}, it suffices to check invariance of the integral on the right hand side. However, this can be done in exactly the same way that we checked left-invariance in the above proof. 
\end{proof}

Recall that a series absolutely converges in $\mbb{R}(\!(X)\!)$ if it converges and in addition the coefficient of each $X^j$ is an absolutely convergent series in $\mbb{R}$.

\begin{corollary} \label{countadd}
The measure on $\GL_2(F)$ is countably additive in the following refined sense. If $U= \bigcup_n U_n \in \mc{R}$ such that $\{ U_n \}$ are countably many disjoint measurable sets and $\sum_n \mu(U_n)$ is an absolutely convergent series in $\mbb{R}(\!(X)\!)$ then $\mu(\bigcup_n U_n) = \sum_n \mu(U_n)$. 
\end{corollary}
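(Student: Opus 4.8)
The plan is to deduce the refined countable additivity on $\GL_2(F)$ from the corresponding property of Fesenko's measure on $F$, using the integral factorization of Theorem \ref{intfactor} as the bridge. Since $U \in \mc{R}$, its measure $\mu(U)$ is defined, and because the $U_n$ are disjoint with union $U$ we have the pointwise identity $\mbb{I}_U = \sum_n \mbb{I}_{U_n}$ (at each point only one summand is nonzero, so there is no pointwise convergence issue). Applying Theorem \ref{intfactor} to $\mbb{I}_U$ and to each $\mbb{I}_{U_n}$, and writing $c = \frac{q^3}{(q^2-1)(q-1)}$, the claim $\mu(U) = \sum_n \mu(U_n)$ becomes the assertion that
$$\int_{F} \int_{F} \int_F \int_{F} \frac{1}{|\alpha\delta - \beta\gamma|_F^2} \sum_n \mbb{I}_{U_n}(g)\, d\alpha\, d\beta\, d\gamma\, d\delta = \sum_n \int_{F} \int_{F} \int_F \int_{F} \frac{1}{|\alpha\delta - \beta\gamma|_F^2}\mbb{I}_{U_n}(g)\, d\alpha\, d\beta\, d\gamma\, d\delta,$$
that is, that the integral over the four copies of $F$ may be exchanged with the countable sum.

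First I would record that on each distinguished set $hK_{i,j}$ the weight $|\det g|_F^{-2}$ is constant, equal to $|\det h|_F^{-2}$, since every $k \in K_{i,j}$ has $\det k \in 1 + t_1^i t_2^j O_F$ and hence $|\det k|_F = 1$. Consequently the weighted multiple integral is, on the ring generated by the $K_{i,j}$-cosets, nothing other than the measure $\mu$ itself (up to the constant $c$), so the exchange above is precisely the refined countable additivity of a Fesenko-type measure, here the fourfold product measure on $F \times F \times F \times F$, as established in \cite{fesenko-aoas1} and \cite{fesenko-loop}. The role of the hypothesis is then transparent: absolute convergence of $\sum_n \mu(U_n)$ in $\mbb{R}(\!(X)\!)$ is exactly the condition under which Fesenko's measure is countably additive, and it is what licenses the interchange.

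The main obstacle is making the exchange rigorous over the nonstandard value field $\mbb{R}(\!(X)\!)$, where convergence is tested coefficient by coefficient. I would reduce to this coefficientwise picture: each $\mu(U_n)$ is a finite $\mbb{R}$-combination of monomials $X^{4j}$, and absolute convergence means precisely that for every fixed $j$ the real series formed by the coefficients of $X^{4j}$ converges absolutely. Using the Remark that $\mu$ is a lift of the genuine Haar measure $\bar\mu$ on $\GL_2(\overline{F})$, the coefficient of $X^{4j}$ in $\mu(U)$ and in each $\mu(U_n)$ is the $\bar\mu$-measure of a locally compact set at residue level $j$, where ordinary real-valued countable additivity applies directly; equivalently, one applies continuity from above to the tails $V_N = U \setminus \bigcup_{n \le N} U_n$, which lie in $\mc{R}$ and satisfy $V_N \downarrow \emptyset$, noting $\mu(U) = \sum_{n\le N}\mu(U_n) + \mu(V_N)$ by finite additivity. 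Assembling these equalities over all $j$ then yields $\mu(U) = \sum_n \mu(U_n)$ in $\mbb{R}(\!(X)\!)$. The delicate point throughout is that the coefficientwise reduction is valid only because absolute convergence excludes the conditionally convergent pathologies Fesenko observed, so I would take care to invoke the hypothesis at exactly the step where the real series at each level $j$ are summed.
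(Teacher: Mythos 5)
Your proposal is correct and follows essentially the same route as the paper: apply Theorem \ref{intfactor} to convert $\mu(U)$ and each $\mu(U_n)$ into weighted iterated integrals over $F$, then interchange the countable sum with those integrals, the interchange being licensed by the refined countable additivity of Fesenko's measure on $F$ under the absolute-convergence hypothesis. One small caution: your parenthetical alternative via continuity from above on the tails $V_N \downarrow \emptyset$ is circular (for a finitely additive measure that property is equivalent to countable additivity), but it is not load-bearing since your main argument stands without it.
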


\begin{proof}
For the measure $\mu_F$ on $F$ this is in \cite{fesenko-loop}, and Theorem \ref{intfactor} allows us to reduce to this case as follows. First of all we may assume each $U_n=C_n \backslash \bigcup_m D_{m,n}$ with $C_n, D_{m,n}$ finitely many distinguished sets by finite additivity. 

By the definition of the integral we have
\begin{equation*}
\mu\left(C_n \backslash \bigcup_m D_{m,n} \right) = \int_{\GL_2(F)} \mbb{I}_{\bigcup_n \left( C_n \backslash \bigcup_m D_{m,n} \right)}(g) dg,
\end{equation*}
and since the union is disjoint the integrand is equal to 
\begin{equation*}
\sum_n \mbb{I}_{C_n \backslash \bigcup_m D_{m,n}} (g).
\end{equation*} 

By applying Theorem \ref{intfactor} in the case $f(g)=\sum_n \mbb{I}_{C_n \backslash \bigcup_m D_{m,n}} (g)$, we obtain the integral
\begin{equation}
\begin{split}
\frac{q^3}{(q^2-1)(q-1)} \int_{F} \int_{F} \int_F \int_{F} \frac{1}{|\alpha \delta - \beta \gamma|_F^2} \sum_n \mbb{I}_{C_n \backslash \bigcup_m D_{m,n}} \bigg(
\bigg( \scalebox{0.8}{$\begin{array}{cc}
\alpha & \beta    \\
\gamma & \delta
 \end{array} $} \bigg) \bigg) \\
 \ d \alpha \ d \beta \ d \gamma \ d \delta.
\end{split}
\end{equation}

Since now we have a (multiple) integral over $F$, we can take the sum outside the integral to obtain
\begin{equation}
\begin{split}
\sum_n \frac{q^3}{(q^2-1)(q-1)} \int_{F} \int_{F} \int_F \int_{F} \frac{1}{|\alpha \delta - \beta \gamma|_F^2}  \mbb{I}_{C_n \backslash \bigcup_m D_{m,n}} \bigg(
\bigg( \scalebox{0.8}{$\begin{array}{cc}
\alpha & \beta    \\
\gamma & \delta
 \end{array} $} \bigg) \bigg) \\
\ d \alpha \ d \beta \ d \gamma \ d \delta,
\end{split}
\end{equation}
which is equal to the previous integral if this sum absolutely converges. However, we can apply Theorem \ref{intfactor} to each of the functions
\begin{equation*}
\mbb{I}_{C_n \backslash \bigcup_m D_{m,n}}(g)
\end{equation*} 
to rewrite this as $$\sum_n \int_{\GL_2(F)} \mbb{I}_{C_n \backslash \bigcup_m D_{m,n}}(g) dg,$$ which equals $$\sum_n \mu \left( C_n \backslash \bigcup_m D_{m,n} \right)$$ by the definition of the integral, and this absolutely converges by assumption. This shows that the two integrals (1) and (2) are equal, and hence we have $$\mu\left( \bigcup_n \left( C_n \backslash \bigcup_m D_{m,n} \right) \right) =\sum_n \mu \left( C_n \backslash \bigcup_m D_{m,n} \right).$$
\end{proof}

\begin{remark} 
We may in fact use the above Corollary to extend the class of measurable sets to all non-ddd-sets which are countable disjoint unions of dd-sets such that the relevant series converges absolutely.
\end{remark}

We end this section with the computation of certain "reasonable" integrals (i.e. not simply characteristic functions of ddd-sets) which do not take values $\mbb{C}$.

\begin{example}
Let $D = \{ g \in M_2(O_F) : \det g \neq 0 \}$ be the "punctured unit disc" in $\GL_2(F)$. For a positive integer $s$, we would like to compute the integral $$I= \int_{D} | \det g|^s_F dg := \int_{\GL_2(F)} |\det g |_F^s \mbb{I}_{D}(g) dg.$$ However, this integral does not converge. 

Indeed, we may first write $D$ as the disjoint union over all $(i,j) \ge (0,0)$ of the "circles" $D_{i,j} = \{ g \in M_2(O_F) : \det g \in t_1^i t_2^j O_F^\times \}$, on which the function $| \det g |_F^s$ takes the constant value $q^{-is}X^{js}$. We may realise the $D_{i,j}$ as cosets of $K = \GL_2(O_F)$ in $D$ by observing that $K$ is the kernel of the map $D \rightarrow \mbb{C}(\!(X)\!)$ sending a matrix $g$ to $|\det g|_F$, and so in particular $\mu(D_{i,j})=1$ for all $i$ and $j$. We thus have $$\int_{D} | \det g|^s_F dg = \sum_{(i,j) \ge (0,0)} q^{-is} X^{js} = \sum_{i=0}^\infty q^{-is} + \sum_{j=1}^\infty \left( \sum_{i=- \infty}^\infty q^{-is} \right) X^{js},$$ and while the first term converges for $s>0$, the inner sum in the second term does not converge for any value of $s$.

However, we may try to approximate this integral by integrating over larger and larger "quarter planes". More precisely, for a pair of integers $m$ and $n$, define $Q_{m,n}$ to be the disjoint union of all $D_{i,j}$ over $i \ge m$ and $j \ge n$ such that $(i,j) \ge (0,0)$. Clearly we have $D = \bigcup_{m \in \mbb{Z}} Q_{m,0}$, or in other words if we integrate over $Q_{m,0}$ for large negative $m$ we should approach the integral $I$. Indeed, if $m<0$ then $$I_{m} = \int_{Q_{m,0}} | \det g |_F^s dg = \sum_{i=0}^\infty q^{-is} + \sum_{j=1}^\infty \sum_{i=m}^\infty q^{-is} X^{js},$$
and now each term converges for $s>0$, giving $$I_m = \frac{1}{1-q^{-s}} + \sum_{j=1}^\infty \frac{q^{-ms}}{1-q^{-s}} X^{js}.$$
\end{example}

\begin{remark}
From the point of view of the "identity" $\sum_{n \in \mbb{Z}} q^n=0$ discussed in \cite{fesenko-aoas1}, the integral $I$ above "agrees" with the classical determinant integral in the one-dimensional case, which has value $(1-q^{-s})^{-1}$. Alternatively, if one considers $X$ as an infinitesimal element, taking the limit as $m \rightarrow - \infty$ of the "dominant term" of $I_m$ also gives $(1-q^{-s})^{-1}$ (recall from the previous section that the measure of Kim and Lee in \cite{kim-lee-measure} may also be seen as taking the "dominant term" of our measure). Note that in the one-dimensional case $s$ may be any complex number with Re$(s) > 0$, whereas we must restrict to positive integers due to the appearence of the indeterminate $X$.
\end{remark}

We conclude with an example of an integral which gives a power series with nonconstant coefficients.

\begin{example}
Let $D_{i,j}$ be as in the previous example, and let $T$ be the disjoint union $T = \bigcup_{i \ge j \ge 0} D_{i,j}$. For an integer $s>0$ we have $$\int_T | \det g|_F^s = \sum_{j=0}^\infty \sum_{i=j}^\infty q^{-is} X^{js} = \sum_{j=0}^\infty \frac{q^{-js}}{1-q^{-s}} X^{js}.$$
\end{example}

\begin{appendix}
\section{Comparing with Morrow's integral} \label{sec:morrow}

The formula in Theorem \ref{intfactor} agrees (up to a constant multiple) with the integral defined by Morrow in \cite{morrow-gln}. In this short section we will briefly compare the two different approaches. First of all, we recall Morrow's method of integration by lifting.

\begin{definition}
Let $g : \overline{F} \rightarrow \mbb{C}$ be a complex-valued function on the residue field of $F$, let $a \in F$, and let $n \in \mbb{Z}$. The lift of $g$ at $(a,n)$ is the function $$g_{a,n}(x) = \begin{cases}
g\left( \overline{(x-a)t_2^{-n}} \right) & \text{if } x \in a + t_2^n \oo_F \\
0 & \text{otherwise}.
\end{cases} $$
\end{definition}

\begin{definition}
Let $\mc{L}(F)$ be the $\mbb{C}(\!(X)\!)$-vector space spanned by functions of the form $x \mapsto g_{a,n}(x) X^r$ for $g$ a Haar-integrable function on $\overline{F}$, $a \in F$, and $n, r \in \mbb{Z}$. Elements of $\mc{L}(F)$ are called integrable functions on $F$.
\end{definition}

\begin{theorem}
There exists a unique linear functional $\int_F ( \cdot ) dx$ on $\mc{L}(F)$ which is invariant under the action of $F$ by additive translations and satisfies $$ \int_F g_{a,n} (x) dx = X^n \int_{\overline{F}} g(u) du$$ for any Haar integrable function $g$ on $F$, where the latter denotes the Haar integral on $\overline{F}$
\end{theorem}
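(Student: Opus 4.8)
The plan is to treat the formula $\int_F g_{a,n}(x)\,dx = X^n \int_{\overline F} g(u)\,du$ as a prescription on the spanning set of $\mc{L}(F)$ and to establish (i) that it determines at most one linear functional, (ii) that any functional it determines is automatically translation-invariant, and (iii) that the prescription is consistent, i.e.\ descends to a well-defined functional on $\mc{L}(F)$. Parts (i) and (ii) I would dispatch first, as they are immediate. For (i): by definition $\mc{L}(F)$ is spanned over $\mbb{C}(\!(X)\!)$ by the lifts $g_{a,n}$, and a linear functional is determined by its values on a spanning set; since the formula fixes exactly these values, at most one such functional exists, which is the asserted uniqueness. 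For (ii): an additive translation by $c \in F$ sends $g_{a,n}$ to $g_{a+c,n}$ (the support ball $a + t_2^n\oo_F$ is shifted to $(a+c)+t_2^n\oo_F$ while the residue profile is unchanged), and the formula assigns these the same value, so invariance holds on the spanning set and hence everywhere by linearity. Thus the entire content lies in the existence statement (iii).

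To prove (iii) I would define the functional on the free $\mbb{C}(\!(X)\!)$-module on the symbols $g_{a,n}$ by the formula and show it annihilates every relation. Writing a relation as $\sum_i \lambda_i\, g_{i,a_i,n_i} = 0$ with $\lambda_i \in \mbb{C}(\!(X)\!)$ and expanding each $\lambda_i$ into powers of $X$, the indeterminate $X$ is formal while the $g_{i,a_i,n_i}$ are honest $\mbb{C}$-valued functions, so comparing coefficients of $X$ reduces everything to relations $\sum_i c_i\, g_{i,a_i,n_i} = 0$ with $c_i \in \mbb{C}$, for which I must deduce $\sum_i c_i X^{n_i}\int_{\overline F} g_i = 0$. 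The key structural difficulty, and the step I expect to be the main obstacle, is that such a relation may mix lifts of \emph{different} levels $n_i$, and one cannot simply pass to a common refinement: a level-$n$ lift expressed through level-$(n+1)$ lifts is an infinite sum indexed by $\overline F$, so the finite combinatorics used for the $K_{i,j}$ in Section~\ref{sec:structure} is not available here.

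My strategy for this obstacle is induction on the number of distinct levels appearing. The base case is a single level $n$: here I would group the lifts by the level-$n$ ball supporting them (lifts centred in distinct balls have disjoint support, lifts centred in the same ball share it), and on a fixed ball the reduction isomorphism $t_2^n\oo_F / t_2^{n+1}\oo_F \simeq \overline F$ identifies the relation with an honest relation $\sum_i c_i\,(T_{\bar\epsilon_i} g_i) = 0$ of Haar-integrable functions on the local field $\overline F$, where centres differing by an element of $t_2^n\oo_F$ produce an additive translation $T_{\bar\epsilon_i}$ on $\overline F$. The desired identity $\sum_i c_i \int_{\overline F} g_i = 0$ then follows from the well-definedness and translation-invariance of the ordinary Haar integral on $\overline F$, and summing over balls (with the common factor $X^n$ pulled out) gives the single-level case. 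For the inductive step, let $n_0$ be the coarsest level present and split the relation into its level-$n_0$ part $h_0$ and its strictly finer part $h_1$. Restricting to the level-$(n_0+1)$ balls, on which $h_0$ is constant and on which the strictly-finer terms of level $\ge n_0+2$ occupy only finitely many proper sub-balls, I would extract on one hand a single-level relation at level $n_0$ (via the residue structure at scale $n_0+1$) and on the other a relation among the finer terms involving one fewer level. The feature that makes this consistent is precisely the finiteness of Haar integrals on $\overline F$: finitely many integrable lifts cannot reproduce a nonzero constant over a residue-theoretically positive-measure part of a ball, since that would force a constant Haar-integrable function on the non-compact field $\overline F$, which must vanish, so no coarse ``phantom mass'' can be manufactured from the finer terms.

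Having reduced the coarsest level to a single-level relation and the remainder to a relation with strictly fewer levels, the inductive hypothesis closes the argument, and recombining the contributions recovers $\sum_i c_i X^{n_i}\int_{\overline F} g_i = 0$. I would then record that this establishes (iii), so that the formula defines a genuine linear functional on $\mc{L}(F)$ which, by (i) and (ii), is the unique translation-invariant one satisfying the stated normalisation. As a sanity check on the normalising power $X^n$, I would note the compatibility with Fesenko's measure: realising $\mbb{I}_{t_1^i t_2^j O_F}$ as the lift at level $j$ of the ball indicator $\mbb{I}_{\bar t_1^i \oo_{\overline F}}$ on $\overline F$ gives $\int_F \mbb{I}_{t_1^i t_2^j O_F}\,dx = X^j\,\mu_{\overline F}(\bar t_1^i \oo_{\overline F}) = q^{-i}X^j$, exactly Fesenko's value.
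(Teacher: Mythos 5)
This theorem is recalled in the appendix from Morrow's work, and the paper's entire ``proof'' is the citation \emph{See \cite{morrow-2dint}}; there is therefore no in-paper argument to compare against, and your proposal is in effect a reconstruction of the proof in the cited reference, which it matches in its essential lines. The easy parts (uniqueness from linearity on the spanning set, translation-invariance from $g_{a,n}\mapsto g_{a+c,n}$, stripping the $\mbb{C}(\!(X)\!)$-coefficients by comparing powers of $X$) are handled correctly, and you correctly identify the real content: well-definedness across relations mixing several levels, where no finite common refinement exists. Your mechanism for this is sound and is the same one that makes Morrow's proof work: peel off the coarsest level $n_0$, note that the level-$n_0$ part $h_0$ is genuinely constant on each level-$(n_0+1)$ ball (the level-$n_0$ residue coordinate does not vary there), that the level-$(n_0+1)$ terms push down to a Haar-integrable function on $\overline F$, and that all strictly finer terms push down to functions supported on finitely many points; since a Haar-integrable function on the non-compact field $\overline F$ cannot agree with a nonzero constant off a null set, each such constant vanishes. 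The one place your write-up should be more explicit is the conclusion of that step: what you actually obtain is that $h_0\equiv 0$ and $h_1\equiv 0$ hold \emph{separately} as pointwise identities, and it is precisely this splitting of the original relation into two independent relations --- one single-level, one with strictly fewer levels --- that licenses applying the base case to the former and the inductive hypothesis to the latter before recombining. With that made explicit the induction closes; your concluding compatibility check $\int_F \mbb{I}_{t_1^i t_2^j O_F}\,dx = q^{-i}X^j$ is also correct and is exactly the compatibility with Fesenko's measure used elsewhere in the paper.
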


\begin{proof}
See \cite{morrow-2dint}.
\end{proof}

This integral was then extended by Morrow to compute an integral over a finite dimensional $F$-vector space $V$. Since he had shown in \cite{morrow-fubini} that Fubini's Theorem may fail for his integral, he considers the integral of Fubini functions. In other words, he considers only functions $f$ on $V$ for which the iterated integral $$ \int_F \dots \int_F f(x_1, \dots, x_n) dx_1 \dots dx_n$$ (where $n$ is the $F$-dimension of $V$ and we have for simplicity fixed an isomorphism $V \simeq F^n$) does not depend on the order of integration. Morrow then defines the value of $$ \int_V f(x) dx$$ to be the common value of these iterated integrals.

In particular, the space $M_n(F)$ of $n \times n$ matrices with entries in $F$ may be realised in a natural way as an $F$-vector space of dimension $n^2$, and so Morrow's integral naturally gives us a way of integrating on $M_n(F)$.

To define an integral on $\GL_n(F)$, Morrow considers functions $\phi$ on $\GL_n(F)$ such that $\tau \mapsto \phi (\tau) | \det \tau |_F^{-n}$ extends to a Fubini function on $M_n(F)$. He then \textit{defines} the integral of $\phi$ via $$\int_{\GL_n(F)} \phi (\tau) d \tau = \int_{M_n(F)} \phi (x) | \det x |^{-n} dx.$$ In the case $n=2$, upon fixing an isomorphism $M_2(F) \simeq F^4$ the right hand integral can be written in the more familiar form $$ \int_F \int_F \int_F \int_F \phi (x) | \det x |^{-2} dx_1 dx_2 dx_3 dx_4,$$ which we now recognise from the right hand side of Theorem \ref{intfactor}. In fact, this Theorem says exactly that our integral agrees (up to a constant multiple) with the one constructed by Morrow.

The advantage of Morrow's approach is that the integrable functions are related in a direct way to functions on the residue field. This is useful (especially if one considers higher dimensional local fields) if we want to make comparisons between residue levels. This theory is very powerful in the sense that one can deduce many things regarding integrals by appealing to corresponding properties at the residue level which are already known, and it also applies to more general objects than higher local fields. (See \cite{morrow-2dint} for examples of this, including applications to zeta integrals.)

On the other hand, it is too much to hope that \textit{all} of the information about integrals on higher dimensional objects is already contained in the residue field. Indeed, if this were the case then it would not be of much interest to study higher dimensional fields at all. For this reason, it is also useful to have an approach like ours which works directly with the higher dimensional objects, but still satisfies the relevant compatibility conditions with residue structures. This allows us to draw information from the residue level when such information is available, but does restrict us so severely when it is not available.

One of the main advantages our approach has over Morrow's is that the integrals on the right hand side of Theorem \ref{intfactor} are against the measure defined by Fesenko in \cite{fesenko-aoas1}, \cite{fesenko-loop}, and so we obtain Corollary \ref{countadd} as an easy consequence of one of his results, while Morrow does not discuss such "countable additivity" results using his integral.

Another advantage of our more direct approach is that we do not need to write our functions in matrix coordinates in order to integrate them. This leaves our theory more open to modification to work for algebraic groups more general than $\GL_n(F)$, and perhaps even more general spaces.

There is, of course, still significant overlap between Morrow's approach and the approach presented here. In particular, the functions $\mbb{I}_{K_{i,j}}$ are integrable in both cases. This simple observation allows us to transfer some rather interesting consequences of Morrow's work into our setting.

For example, using Proposition 7.1 of \cite{morrow-fubini} and induction, one may construct a function on $\GL_2(F)$ which is integrable in the sense of this text such that the multiple integral in matrix coordinates (see Lemma \ref{easyfubini} above) does not satisfy the Fubini property. All we require to do this is a function of several variables which is integrable in both senses as a starting point, and for this we can just take $\mbb{I}_{K_{i,j}}$ in matrix coordinates.

\end{appendix}


\begin{thebibliography}{Bum98}

\bibitem[Bum98]{bump} {\scshape Bump, Daniel.} Automorphic Forms and Representations. Cambridge Studies in Advanced Mathematics, 55. {\em Cambridge University Press, Cambridge}, 1998. 

\bibitem[Fes03]{fesenko-aoas1} {\scshape Fesenko, Ivan.} Analysis on arithmetic schemes. I. {\em Docum. Math. Extra Kato Volume} (2003) 261--284. 

\bibitem[Fes05]{fesenko-loop} {\scshape Fesenko, Ivan.} Measure, integration and elements of harmonic analysis on generalized loop spaces. {\em Proc. of St. Petersburg Math. Soc.} {\bf 12} (2005) 179--199.

\bibitem[GH11]{goldfeld-hundley1} {\scshape Goldfeld, Dorian; Hundley, Joseph.} Automorphic Representations and $L$-Functions for the General Linear Group, volume~I. Cambridge Studies in Advanced Mathematics, 129. {\em Cambridge University Press, Cambridge}, 2011. 

\bibitem[Kap01]{kapranov-double-affine} {\scshape Kapranov, Mikhail.} Double affine Hecke algebras and 2-dimensional local fields. {\em J. Amer. Math. Soc.} {\bf 14} (2001) 239--262. 

\bibitem[KG04]{kazhdan-gaitsgory} {\scshape Kazhdan, David; Gaitsgory, Dennis.} Representations of algebraic groups over a 2-dimensional local field. {\em D. Geom. funct. anal.} {\bf 14} (2004) 535-574. 


\bibitem[KL]{kim-lee-measure} {\scshape Kim, Henry~H.; Lee, Kyu-Hwan.} An Invariant Measure on ${GL}_n$ Over 2-Dimensional Local Fields. Preprint.


\bibitem[KL04]{kim-lee-spherical} {\scshape Kim, Henry~H.; Lee, Kyu-Hwan.} Spherical Hecke algebras of {$SL_2$} over 2-dimensional local fields. {\em Amer. J. Math.} {\bf 126} (2004) 1381--1399.

\bibitem[Lee10]{lee-iwahori} {\scshape Lee, Kyu-Hwan.} Iwahori-Hecke algebras of {$SL_2$} over 2-dimensional local fields. {\em Canad. J. Math.} {\bf 62} (2010) 1310--1324.

\bibitem[Mor]{morrow-fubini} {\scshape Morrow, Matthew.} Fubini's theorem and non-linear changes of variables over a two-dimensional local field. \url{arXiv:0712:2177}.

\bibitem[Mor08]{morrow-gln} {\scshape Morrow, Matthew.} Integration on product spaces and {$\GL_n$} of a valuation field over a local field. {\em Communications in Number Theory and Physics} {\bf 2} (2008) 563--592. 

\bibitem[Mor10]{morrow-2dint} {\scshape Morrow, Matthew.} Integration on valuation fields over local fields. {\em Tokyo J. Math} {\bf 33} (2010) 235--281.

\bibitem[vU]{wester-fourier} {\scshape van Urk, Wester.} Fourier analysis on higher local fields. To appear.

\end{thebibliography}
\end{document}